\newtheorem{thm}{Theorem}
\newtheorem{defi}{Definition}
\newtheorem{lem}{Lemma}
\newtheorem{cor}{Corollary}
\newtheorem{rem}{Remark}
\newtheorem{prop}{Proposition}
\title{\LARGE \bf
A Convex Approach to Hydrodynamic Analysis
}
\author{Mohamadreza Ahmadi,  Giorgio Valmorbida, Antonis Papachristodoulou
\thanks{The authors are with the Department
of Engineering Science, University of Oxford, Oxford,
OX1 3PJ, UK e-mail: (\{mohamadreza.ahmadi, giorgio.valmorbida, antonis\}@eng.ox.ac.uk). M. Ahmadi is supported by the Clarendon Scholarship and the Sloane-Robinson Scholarship. G. Valmorbida is also affiliated to Somerville College, University of Oxford, Oxford, U.K. A. Papachristodoulou was supported in part by the Engineering and Physical Sciences Research Council projects EP/J012041/1, EP/I031944/1, EP/M002454/1,
and EP/J010537/1.
}
}
\begin{document}

\maketitle
\thispagestyle{empty}
\pagestyle{empty}

\begin{abstract}

We study stability and  input-state analysis of three dimensional (3D)~incompressible,  viscous flows with invariance in one direction. By taking advantage of this invariance property, we propose a class of Lyapunov and storage functionals. We then consider exponential stability, induced $\mathcal{L}^2$-norms, and input-to-state stability (ISS). For streamwise constant flows, we formulate conditions based on matrix inequalities. We show that in the case of polynomial laminar flow profiles the matrix inequalities can be checked via convex optimization. The proposed method is illustrated by an example of rotating Couette flow. 
\end{abstract}

\section{INTRODUCTION}

The dynamics of incompressible fluid flows is described by a set of nonlinear partial differential equations known as the Navier-Stokes equations. The properties of such flows are then characerized in terms of a dimensionless parameter $Re$ called the Reynolds number. Experiments show that many flows have a critical Reynolds number $Re_C$ below which global stability is ensured. However, spectrum analysis of the linearized Navier-Stokes equations, considering only infinitesimal perturbations, 
predicts a linear stability limit $Re_L$ which upper-bounds $Re_C$~\cite{DR81}. On the other hand, the bounds using energy methods $Re_E$, the limiting value for which the energy of arbitrary large perturbations decreases monotonically, are much below $Re_C$~\cite{J76}. For example, $Re_E =32.6$~\cite{Se59}, $Re_L = \infty$~\cite{Romanov73} and $Re_C=350$~\cite{TA92} for 3D Couette flow.


 The discrepancy between $Re_L$ and $Re_C$ have long been attributed to the eigenvalues analysis approach~\cite{Trefethen30071993}, citing a phenomenon called \textit{transient growth}; i.e., although the perturbations to the linearized Navier-Stokes equation are stable, they undergo high amplitude transient amplifications that steer the trajectories out of the region of linearization. This has led to studying the resolvent operator or $\varepsilon$-pseudospectra based on the general solution to the linearized Navier-Stokes equations~\cite{Sch07}. Another method for studying stability is based on spectral truncation of the  Navier-Stokes equations into an ODE system.  Recently in~\cite{GC12,CGHP14}, a method was proposed based on keeping  a number of modes from Galerkin expansion and bounding the energy of the remaining modes. However, these bounds on $Re_C$ turn out to be conservative.
   
    Since the seminal paper by Reynolds~\cite{Rey83}, it was observed that external excitations and body forces play an important role in flow instabilities.  Mechanisms such as energy amplification of external excitations  have shown to be crucial in understanding transition to turbulence~\cite{J76}.  Energy amplification  of stochastic forcings to the linearized Navier-Stokes equations in parallel channel flows was studied in~\cite{FI93,BD01}. In~\cite{BD01}, using the linearized Navier-Stokes equation, it was shown analytically, through the calculation of traces of operator Lyapunov equations, that the $\mathcal{H}^2$-norm from streamwise constant excitations to perturbation velocities is proportional to $Re^3$. The $O(Re^3)$ amplification mechanism of the linearized Navier-Stokes equation was verified in~\cite{JB05} and \cite{mjphd04}, where the influence of each component of the body forces was calculated in terms of $\mathcal{H}^2$ and $\mathcal{H}^\infty$-norms. Input-output analysis of a model of plane Couette flow was carried out in~\cite{GMBPD11} to study the nonlinear mechanisms associated with turbulence. In another vein, an input-state analysis method for the linearized Navier-Stokes equation by calculating the spatio-temporal impulse responses was given in~\cite{JB01}. 
   
 In this paper, we study the stability and input-state properties of incompressible, viscous fluid flows. We study input-state properties such as induced $\mathcal{L}^2$-norms from body forces to perturbation velocities and ISS. In particular, we consider flows with invariance in one of the three spatial coordinates. For such flows, we formulate a suitable structure as a Lyapunov/storage functional. Then, based on these functionals, for streamwise constant flows, we propose conditions based on matrix inequalities. In the case of polynomial laminar velocity profiles, e.g. Couette and Poiseuille flows, these inequalities can be checked via convex optimization using available computational tools. The proposed method is applied to the analysis problem of a rotating Couette flow.

The paper is organized as follows. The next section presents some preliminary results. In Section~\ref{sec:Lyap}, we formulate the Lyapunov/storage functional structure. Section~\ref{sec:convex} is concerned with the convex formulation for streamwise constant flows. The proposed method is illustrated by studying an example of a model of rotating Couette flow in Section~\ref{sec:NR}. Finally, Section~\ref{sec:conclusions} concludes the paper and provides directions for future research.\\


\noindent\textbf{Notation:} The $n$-dimensional Euclidean space is denoted by $\mathbb{R}^n$. 
The $n \times n$ identity matrix is denoted by $\mathrm{I}_{n \times n}$. A domain $\Omega \subset \mathbb{R}^n$ is a connected, open subset of $\mathbb{R}^n$, and $\overline{\Omega}$ is the closure of set $\Omega$. The boundary $\partial \Omega$ of set $\Omega$ is defined as $\overline{\Omega} \setminus \Omega$ with $\setminus$ denoting set subtraction. The space of $p$-th power integrable functions defined over $\Omega$ is denoted  $\mathcal{L}^p_\Omega$ endowed with the norm 
$
\| (\cdot) \|_{\mathcal{L}^p_\Omega} = \left( \iint_\Omega (\cdot)^p \,\, \mathrm{d}\Omega  \right)^{\frac{1}{p}},
$ 
for $1\le p < \infty$, and $
\| (\cdot) \|_{\mathcal{L}^\infty_\Omega} = \sup_{x \in \Omega} | (\cdot) |,
$ 
for $p = \infty$.
Also, we denote by $\mathcal{L}^2_{[t_0,T],\Omega}$, with $t_0\ge0$, the space of square integrable functions in $x\in \Omega$ and $t \in [t_0,T]$ with the norm
$$
\| (\cdot) \|_{\mathcal{L}^2_{[t_0,T),\Omega}} = \left( \int_{t_0}^T \| u \|_{\mathcal{L}^2_\Omega}^2\,\, \mathrm{d}t  \right)^{\frac{1}{2}}.
$$
The space of $k$-times continuous differentiable functions defined on $\Omega$ is denoted by $\mathcal{C}^k(\Omega)$.
  If $p\in \mathcal{C}^1$, then   $\partial_{x_1} p$ is used to denote the derivative of $p$ with respect to variable $x_1$, i.e. $\partial_{x_1} :=\frac{\partial}{\partial x_1}$. A continuous strictly increasing function $k: [0,a)^p \to \mathbb{R}_{\ge0}$, satisfying $k(0)=0$, belongs to class $\mathcal{K}$. If $a=\infty$ and $\lim_{x \to \infty} k(x) = \infty$, $k$ belongs to class $\mathcal{K_\infty}$.  The unit vector in direction $x_i$ is denoted by $\overrightarrow{e}_{i}$. For a scalar function ${v}$, $\nabla {v} = \sum_i \partial_{x_i} v \overrightarrow{e}_{i}$ denotes the gradient and $\nabla^2 {v} = \sum_i \partial_{i}^2 v$ denotes the Laplacian. For a vector valued function $\boldsymbol{w}=\sum_i w_i \overrightarrow{e}_{i}$, the divergence $\nabla \cdot \boldsymbol{w}$ is given by $\nabla \cdot \boldsymbol{w} = \sum_i \partial_{x_i} w_i$. 


\section{Preliminiaries}

\subsection{Flow Model}

We consider  incompressible, viscous flows with invariance in one of the directions\footnote{Invariance in one direction is  a common assumption in the case of  several fluid models, namely, Couette flow, Poiseuille flow, Taylor-Couette flow, etc.} $x_m$, $m \in \{1,2,3\}$, i.e.,~\mbox{$\partial_{x_m} =0$}. Let $I= \{1,2,3\}-\{m\}$. The flow dynamics is described by the Navier-Stokes equations, given~by
\begin{eqnarray} \label{eq:NS}
\partial_t \boldsymbol{\bar{u}} &=& \frac{1}{Re} \nabla^2  \boldsymbol{\bar{u}}-  \boldsymbol{\bar{u}}\cdot \nabla  \boldsymbol{\bar{u}}- \nabla {\bar{p}} + F  \boldsymbol{\bar{u}}+\boldsymbol{d}, \nonumber \\
0 &=& \nabla \cdot \bar{\boldsymbol{u}},
\end{eqnarray}

\noindent where $t>0$, $F \in \mathbb{R}^{3\times3}$, and~{$\mathrm{x} \in \Omega = \Omega_{i} \times \Omega_j \subset \mathbb{R} \times \mathbb{R}$} with $\mathrm{x}=(x_i,x_j)'$, $i,j \in I$ being the spatial coordinates. 
The dependent variable $\boldsymbol{d}(t,\mathrm{x}) = \begin{bmatrix} d_1(t,\mathrm{x}) & d_{2}(t,\mathrm{x})  & d_{3}(t,\mathrm{x})  \end{bmatrix}'$ is the input vector representing exogenous excitations or body forces, $\boldsymbol{\bar{u}}(t,\mathrm{x}) = \begin{bmatrix} \bar{u}_{1}(t,\mathrm{x}) & \bar{u}_{2}(t,\mathrm{x}) & \bar{u}_{3}(t,\mathrm{x})  \end{bmatrix}'$ is the velocity vector, and ${\bar{p}}(t,\mathrm{x})$ is the pressure. 

We consider perturbations $(\boldsymbol{u},{p})$ to the stationary flow $(\boldsymbol{U},{P})$. That is,
\begin{equation} \label{eq:perturbsub}
\boldsymbol{\bar{u}} = \boldsymbol{u} + \boldsymbol{U},~{\bar{p}} = {p} + {P},
\end{equation}
where $(\boldsymbol{U},P)$ satisfy
\begin{eqnarray} \label{eq:eU}
0 &=& \frac{1}{Re} \nabla^2 \boldsymbol{U} -\boldsymbol{U} \cdot \nabla \boldsymbol{U} - \nabla P + F\boldsymbol{U}, \nonumber \\
0 &=& \nabla \cdot \boldsymbol{U}.
\end{eqnarray}
Substituting~\eqref{eq:perturbsub} in~\eqref{eq:NS} and using~\eqref{eq:eU}, we  obtain the perturbation  dynamics
\begin{eqnarray} \label{eq:mainNS}
\partial_t \boldsymbol{u} &=& \frac{1}{Re} \nabla^2 \boldsymbol{u} - \boldsymbol{u} \cdot \nabla \boldsymbol{u} - \boldsymbol{U} \cdot \nabla \boldsymbol{u} - \boldsymbol{u} \cdot \nabla \boldsymbol{U} \nonumber \\
 && -\nabla p + F \boldsymbol{u} + \boldsymbol{d}, \nonumber \\
0 &=& \nabla \cdot \boldsymbol{u}.
\end{eqnarray}
%
In this paper, we concentrate on perturbations with no-slip boundary conditions $\boldsymbol{u}|_{\partial \Omega} \equiv 0$ and periodic boundary conditions.

\subsection{Stability and Input-to-State Analysis}

In this section, we briefly review a number of definitions and results from~\cite{VAP14} and~\cite{AVP14}.

\begin{defi} [Exponential Stability] \label{def:stab}
The stationary solution $(0,p_0)$ of~\eqref{eq:mainNS} with $\boldsymbol{d} \equiv 0$ is exponentially stable in $\mathcal{L}^2_\Omega$, if there exists a $\lambda >0$, such that for all $t \ge 0$
\begin{equation}
 \| \boldsymbol{u}(t,\mathrm{x}) \|^2_{\mathcal{L}^2_\Omega}  \le \| \boldsymbol{u}(0,\mathrm{x}) \|^2_{\mathcal{L}^2_\Omega} e^{-\lambda t}.
 \end{equation}
 That is, system~\eqref{eq:NS} converges to the laminar flow $(\boldsymbol{U},P)$ as in~\eqref{eq:eU}.
\end{defi}

\begin{defi} [input-to-State Properties]
\hfill
\begin{itemize}
\item [A.] \textit{Induced $\mathcal{L}^2$-norm Boundedness}: For some \mbox{$\eta_i >0$, $i=1,2,3$},
\begin{equation} \label{eq:L2}
\| \boldsymbol{u}(t,\mathrm{x}) \|_{\mathcal{L}^2_{[0,\infty),\Omega}} \le \sum_{i=1}^3\eta_i \| d_{i}(t,\mathrm{x}) \|_{\mathcal{L}^2_{[0,\infty),\Omega}}
\end{equation}
subject to zero initial conditions $\boldsymbol{u}(0,\mathrm{x})\equiv0,~\forall \mathrm{x} \in \Omega$.

\item [B.] \textit{Input-to-State Stability}: For some scalar $\psi>0$, functions $\beta,\tilde{\beta},\chi \in \mathcal{K}_\infty$, and  $\sigma \in \mathcal{K}$, it holds that
\begin{multline}\label{eq:iss}
\|\boldsymbol{u}(t,\mathrm{x})\|_{\mathcal{L}^2_\Omega} \le \beta \bigg( e^{-\psi t} \chi \left(\|\boldsymbol{u}(0,\mathrm{x})\|_{\mathcal{L}^2_\Omega} \right)  \bigg)  \\+ \tilde{\beta} \bigg( \sup_{\tau \in [0,t)} \big( \iint_\Omega \sigma \big(|\boldsymbol{d}(\tau,\mathrm{x})|\big) \,\, \mathrm{d}\Omega \big)  \bigg),
\end{multline}
\noindent for all $t>0$.
\end{itemize}
\end{defi}

 \begin{rem}
  Due to nonlinear dynamics, the actual induced $\mathcal{L}^2$-norms of system~\eqref{eq:mainNS} are nonlinear functions of $\|\boldsymbol{d}\|_{\mathcal{L}^2_\Omega}$. The quantities $\eta_i,~i=1,2,3$ provide  upper-bounds on the actual  induced $\mathcal{L}^2$-norms.
  \end{rem}

\begin{rem}
The ISS property~\eqref{eq:iss} implies the exponential convergence to the laminar flow $(\boldsymbol{U},P)$ in $\mathcal{L}^2_\Omega$ when $\boldsymbol{d} \equiv 0$. Moreover, as $t \to \infty$, we obtain
\begin{multline}
\lim_{t \to \infty}\| \boldsymbol{u}(t,\mathrm{x}) \|_{\mathcal{L}^2_\Omega} \le \beta \left(  \iint_\Omega \|\sigma(|\boldsymbol{d}(t,\mathrm{x})|) \|_{\mathcal{L}^\infty_{[0,\infty)} }\,\, \mathrm{d}\Omega  \right)  \\
 \le  \beta \left(   \iint_\Omega \sigma(\|\boldsymbol{d}(t,\mathrm{x})\|_{\mathcal{L}^\infty_{[0,\infty)} }) \,\, \mathrm{d}\Omega  \right),
\end{multline}
wherein, the fact that $\sigma, \beta \in \mathcal{K}$ is used. Hence, as long as the external excitations or body forces $\boldsymbol{d}$ are bounded in $\mathcal{L}^\infty_{[0,\infty)}$ (this encompasses persistent excitations), the perturbation velocities  $\boldsymbol{u}$ are bounded in  $\mathcal{L}^2_\Omega$ sense. 
\end{rem}

The next result converts the tests for exponential stability, induced $\mathcal{L}^2$-norm boundedness, and ISS into the existence problem of a Lyapunov or a storage functional satisfying a set of inequalities.

\begin{thm} \label{bigthm}
Consider  perturbation model~\eqref{eq:mainNS}. If there exist a positive definite Lyapunov functional $V(\boldsymbol{u})$ and a positive semidefinite  storage functional $S(\boldsymbol{u})$, positive scalars $\{\eta_i\}_{i\in \{1,2,3\}}$, $\{c_i\}_{i \in \{1,2,3\}}$, $\psi$, and functions $\beta_1,\beta_2 \in \mathcal{K}_\infty$, $\sigma \in \mathcal{K}$, such that\\
\mbox{I)} when $\boldsymbol{d} \equiv 0$,
\begin{equation} \label{sda1}
c_1 \| \boldsymbol{u} \|_{\mathcal{L}^2_\Omega}^2 \le V(\boldsymbol{u})  \le c_2 \| \boldsymbol{u} \|_{\mathcal{L}^2_\Omega}^2,
\end{equation}
\begin{equation} \label{sddsf1}
\partial_t V(\boldsymbol{u}) \le -c_3 \| \boldsymbol{u} \|_{\mathcal{L}^2_\Omega}^2,
\end{equation}
\mbox{II)}
\begin{eqnarray} \label{e6}
\partial_t S(\boldsymbol{u}) \le - \iint_\Omega \boldsymbol{u}^\prime \boldsymbol{u} \,\, \mathrm{d}\Omega
+  \iint_\Omega \boldsymbol{d}^\prime \left[ \begin{smallmatrix} \eta_1^2 & 0 & 0 \\ 0 & \eta_2^2 & 0 \\ 0 & 0 & \eta_3^2  \end{smallmatrix} \right] \boldsymbol{d} \,\, \mathrm{d}\Omega,
\end{eqnarray}
\mbox{III)}
\begin{equation} \label{e11}
\beta_1(\|\boldsymbol{u}\|_{\mathcal{L}^2_\Omega}) \le S(\boldsymbol{u}) \le \beta_2(\|\boldsymbol{u}\|_{\mathcal{L}^2_\Omega}),
\end{equation}
\begin{equation} \label{e12}
\partial_t S(\boldsymbol{u}) \le -  \psi S(\boldsymbol{u}) + \iint_\Omega \sigma(|\boldsymbol{d}(t,\mathrm{x})|) \,\, \mathrm{d}\Omega,
\end{equation}

\noindent for all $t>0$, then, respectively, system \eqref{eq:mainNS}

\noindent \mbox{I)}  is exponentially stable,

\noindent \mbox{II)} has  induced  \textit{$\mathcal{L}^2$-norm} upper-bounds  $\eta_i$, $i=1,2,3$ as in~\eqref{eq:L2},

\noindent \mbox{III)}  is \textit{ISS} and satisfies \eqref{eq:iss} with $\chi = \beta_2$, $\beta(\cdot)=\beta_1^{-1}\circ 2(\cdot)$ and \mbox{$\tilde{\beta}(\cdot)=\beta_1^{-1}\circ \frac{2}{\psi} (\cdot)$}.
\end{thm}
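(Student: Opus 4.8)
The plan is to reduce every claim to a scalar comparison (Gr\"onwall) argument applied to the real-valued map $t \mapsto V(\boldsymbol{u}(t,\mathrm{x}))$ or $t \mapsto S(\boldsymbol{u}(t,\mathrm{x}))$ evaluated along solutions of~\eqref{eq:mainNS}. Although the state lives in an infinite-dimensional space, these compositions are ordinary scalar functions of $t$, so the differential inequalities~\eqref{sddsf1},~\eqref{e6} and~\eqref{e12} are genuine scalar ODE inequalities and the classical comparison lemma delivers explicit bounds on $V$ and $S$; the norm estimates then follow by squeezing with~\eqref{sda1} and~\eqref{e11}.

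For Part~I, I would use the right inequality in~\eqref{sda1}, i.e. $\|\boldsymbol{u}\|_{\mathcal{L}^2_\Omega}^2 \ge V/c_2$, to turn~\eqref{sddsf1} into $\partial_t V \le -(c_3/c_2)\,V$. Integrating gives $V(t) \le V(0)\,e^{-(c_3/c_2)t}$, and sandwiching with both bounds in~\eqref{sda1} yields $\|\boldsymbol{u}(t,\mathrm{x})\|_{\mathcal{L}^2_\Omega}^2 \le (c_2/c_1)\,\|\boldsymbol{u}(0,\mathrm{x})\|_{\mathcal{L}^2_\Omega}^2\,e^{-(c_3/c_2)t}$, i.e. exponential decay at rate $\lambda=c_3/c_2$ (up to the overshoot factor $c_2/c_1 \ge 1$ inherent to the conditioning of $V$). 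For Part~II, I would integrate~\eqref{e6} over $[0,T]$, write its forcing term as $\sum_{i=1}^3 \eta_i^2\,\|d_i\|_{\mathcal{L}^2_\Omega}^2$, and invoke $S\ge 0$ together with the zero initial condition $S(\boldsymbol{u}(0,\mathrm{x}))=0$ to obtain $\int_0^T \|\boldsymbol{u}\|_{\mathcal{L}^2_\Omega}^2\,\mathrm{d}t \le \sum_{i=1}^3 \eta_i^2 \int_0^T \|d_i\|_{\mathcal{L}^2_\Omega}^2\,\mathrm{d}t$. Letting $T\to\infty$ and applying the elementary inequality $\sqrt{\sum_i a_i^2}\le\sum_i a_i$ for $a_i\ge0$ converts the squared bound into the linear form~\eqref{eq:L2}.

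For Part~III, I would integrate~\eqref{e12} with the integrating factor $e^{\psi t}$, giving $S(t)\le e^{-\psi t}S(0)+\int_0^t e^{-\psi(t-\tau)} w(\tau)\,\mathrm{d}\tau$ where $w(\tau)=\iint_\Omega \sigma(|\boldsymbol{d}(\tau,\mathrm{x})|)\,\mathrm{d}\Omega$. Bounding the convolution by $\tfrac{1}{\psi}\sup_{\tau\in[0,t)}w(\tau)$, then using the lower bound $\|\boldsymbol{u}\|_{\mathcal{L}^2_\Omega}\le\beta_1^{-1}(S)$ from~\eqref{e11} and the upper bound $S(0)\le\beta_2(\|\boldsymbol{u}(0,\mathrm{x})\|_{\mathcal{L}^2_\Omega})$, collects everything inside a single $\beta_1^{-1}$. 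The concluding step is the class-$\mathcal{K}_\infty$ splitting $\beta_1^{-1}(a+b)\le\beta_1^{-1}(2a)+\beta_1^{-1}(2b)$ (valid since $a+b\le2\max\{a,b\}$ and $\beta_1^{-1}$ is increasing), which separates the initial-condition term from the forcing term and reproduces~\eqref{eq:iss} with $\chi=\beta_2$, $\beta(\cdot)=\beta_1^{-1}\circ2(\cdot)$ and $\tilde{\beta}(\cdot)=\beta_1^{-1}\circ\tfrac{2}{\psi}(\cdot)$.

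I expect the ISS part to be the main obstacle: the $\mathcal{K}$/$\mathcal{K}_\infty$ bookkeeping must be done carefully so that the final comparison functions match exactly the claimed $\beta,\tilde{\beta},\chi$, and one must justify that the scalar comparison lemma legitimately applies to $S$ composed with a PDE trajectory---in particular that $t\mapsto S(\boldsymbol{u}(t,\mathrm{x}))$ is (absolutely) continuous and satisfies~\eqref{e12} along solutions. Parts~I and~II are then routine once this comparison machinery is in place.
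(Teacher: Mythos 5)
Your proof is correct, but it is genuinely different from what the paper does: the paper's entire proof of Theorem~\ref{bigthm} is a one-line citation (``direct application of Theorem 1 in \cite{VAP14} and Theorem 1 in \cite{AVP14}''), whereas you reconstruct the content of those cited results from scratch. Your three arguments are exactly the right ones and they close: the Gr\"onwall step $\partial_t V \le -(c_3/c_2)V$ for Part~I; integration of \eqref{e6} over $[0,T]$ with $S\ge 0$, $S(0)=0$, followed by $\bigl(\sum_i a_i^2\bigr)^{1/2}\le \sum_i a_i$ to pass from the squared bound to the linear form \eqref{eq:L2} for Part~II; and the integrating-factor estimate plus the splitting $\beta_1^{-1}(a+b)\le \beta_1^{-1}(2a)+\beta_1^{-1}(2b)$ for Part~III, which reproduces the stated $\chi=\beta_2$, $\beta=\beta_1^{-1}\circ 2(\cdot)$, $\tilde\beta=\beta_1^{-1}\circ\frac{2}{\psi}(\cdot)$ exactly. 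What your approach buys is a self-contained, checkable argument that also makes visible the regularity hypothesis (absolute continuity of $t\mapsto S(\boldsymbol{u}(t,\cdot))$ along trajectories) that the citation-style proof hides; what the paper's approach buys is brevity and delegation of precisely those infinite-dimensional technicalities to references where they are treated once and for all.

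One caveat worth recording: in Part~I your sandwich argument yields $\|\boldsymbol{u}(t,\cdot)\|^2_{\mathcal{L}^2_\Omega}\le (c_2/c_1)\,\|\boldsymbol{u}(0,\cdot)\|^2_{\mathcal{L}^2_\Omega}e^{-(c_3/c_2)t}$, with the overshoot factor $c_2/c_1\ge 1$ that you flag. The paper's Definition~\ref{def:stab} is stated with prefactor $1$, so strictly speaking the Lyapunov conditions \eqref{sda1}--\eqref{sddsf1} deliver exponential stability only in the standard (overshoot-allowing) sense, not literally Definition~\ref{def:stab} unless $c_1=c_2$. This is a looseness in the paper's definition rather than a gap in your proof---the same issue is inherited by the cited theorems---but if you want your argument to match the paper's stated conclusion verbatim you should either note that Definition~\ref{def:stab} is to be read with a constant prefactor, or specialize to functionals of the form \eqref{eq:Lyap} with equal weights where $c_1=c_2$.
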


\begin{proof}
This is a direct application of Theorem 1 in \cite{VAP14} and Theorem 1 in \cite{AVP14}. 
\end{proof}

\section{Lyapunov and Storage Functionals\\  for Fluid Flows} \label{sec:Lyap}

In this section, we derive  classes of Lyapunov and storage functionals suitable for analysis of system~\eqref{eq:mainNS} subject to invariance in one of the three spatial coordinates. In the following, we adopt Einstein's multi-index notation over index $j$, that is the sum over repeated indices $j$, e.g., $v_j \partial_{x_j} u_j = \sum_j v_j\partial_{x_j} u_j $.

The perturbation model \eqref{eq:mainNS}  can be re-written as
\begin{eqnarray} \label{eq:mainNSEin}
\partial_t u_i &=& \frac{1}{Re} \nabla^2 u_i - u_j  \partial_{x_j} u_i - U_j  \partial_{x_j} u_i \nonumber \\
&& - u_j  \partial_{x_j} U_i - \partial_{x_i} p + F_{ij} u_j + d_i, \nonumber \\
0 &=& \partial_{x_j} u_j.
\end{eqnarray}
where $i,j \in \{1,2,3\}$ and $F_{ij}$ is the $(i,j)$ entry of $F$. 


The next theorem states, under which Lyapunov/storage functional structure, the time derivative of the Lyapunov/storage functional takes the form of a quadratic form in  dependent variables $\boldsymbol{u}$ and their spatial derivatives, by removing the nonlinear convection and pressure terms.

\begin{prop} \label{prop1}
Consider the perturbation model~\eqref{eq:mainNSEin} subject to  periodic or   no-slip boundary conditions $\boldsymbol{u}|_{\partial\Omega} =0$. Assume~\eqref{eq:mainNSEin} is invariant with respect to $x_m$, $m\in\{1,2,3\}$. Let $I = \{1,2,3\}-\{m\}$ and
\begin{multline} \label{eq:Lyap}
V(\boldsymbol{u}) = \frac{1}{2}\iint_\Omega \boldsymbol{u}^\prime \left[ \begin{smallmatrix} k_m & 0 & 0 \\ 0 & k_i & 0 \\ 0 & 0 & k_j \end{smallmatrix} \right] \boldsymbol{u} \,\, \mathrm{d}\Omega \\=\frac{1}{2}\iint_\Omega \sum_{i=1}^3k_i u_i(t,\mathrm{x})^2\,\, \mathrm{d}\Omega,
\end{multline}
where $k_i = k_j$ for $i,j \in I$, be a candidate Lyapunov or storage functional. Then, the time derivative of~\eqref{eq:Lyap} satisfies 
\begin{multline} \label{eq:Lyapmaindt}
\partial_t V(\boldsymbol{u}) \le  -\sum_{i=1}^3k_i\iint_\Omega  \bigg( \frac{ C(\Omega)}{Re} u_i^2   +  U_j  u_i \partial_{x_j} u_i \\  +u_j u_i  \partial_{x_j} U_i  -  u_i F_{ij} u_j \bigg) \,\, \mathrm{d}\Omega,
\end{multline}
where $C>0$.
\end{prop}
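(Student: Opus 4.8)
The plan is to differentiate $V$ along trajectories of \eqref{eq:mainNSEin} with $\boldsymbol{d}\equiv 0$, substitute the momentum equation, and then dispose of the viscous, self-advection, and pressure contributions term by term using integration by parts together with incompressibility, the boundary conditions, and the invariance in $x_m$. First I would write
$$\partial_t V(\boldsymbol{u}) = \iint_\Omega \sum_{i=1}^3 k_i u_i \,\partial_t u_i \, \mathrm{d}\Omega,$$
and insert $\partial_t u_i = \frac{1}{Re}\nabla^2 u_i - u_j \partial_{x_j} u_i - U_j \partial_{x_j} u_i - u_j \partial_{x_j} U_i - \partial_{x_i} p + F_{ij} u_j$. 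This produces six groups of terms. The three associated with base-flow advection ($U_j u_i \partial_{x_j} u_i$), the production coupling ($u_j u_i \partial_{x_j} U_i$), and the body-force term ($u_i F_{ij} u_j$) I would leave untouched, since they already appear verbatim on the right-hand side of \eqref{eq:Lyapmaindt} with the correct signs.

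For the viscous term, I would integrate $\iint_\Omega u_i \nabla^2 u_i \, \mathrm{d}\Omega$ by parts via Green's first identity. The boundary integral vanishes under either no-slip $\boldsymbol{u}|_{\partial\Omega}=0$ or periodic conditions, leaving $-\iint_\Omega |\nabla u_i|^2 \, \mathrm{d}\Omega$. Invoking the Poincar\'e inequality on the two-dimensional domain $\Omega = \Omega_i \times \Omega_j$ (valid precisely because of these boundary conditions) gives $\iint_\Omega |\nabla u_i|^2 \,\mathrm{d}\Omega \ge C(\Omega)\iint_\Omega u_i^2 \, \mathrm{d}\Omega$ for some $C(\Omega)>0$, which produces the $-\frac{C(\Omega)}{Re} u_i^2$ contribution. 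This is the only place an inequality, rather than an equality, enters.

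The crux is showing that both the perturbation self-advection and the pressure term drop out. For self-advection I would use $u_i \partial_{x_j} u_i = \frac{1}{2}\partial_{x_j}(u_i^2)$ to rewrite $\sum_i k_i u_i u_j \partial_{x_j} u_i = \frac{1}{2} u_j \partial_{x_j}\big(\sum_i k_i u_i^2\big)$, so that after integration by parts the integrand reduces to a multiple of $\nabla\cdot\boldsymbol{u}$ plus a boundary flux; incompressibility and the boundary conditions annihilate both, for any choice of weights. The pressure term $\sum_i k_i u_i \partial_{x_i} p$ is more delicate and is where the hypotheses genuinely interact: because the flow is invariant in $x_m$, one has $\partial_{x_m} p = 0$, so the $i=m$ summand vanishes and the surviving sum runs only over $i\in I$, on which the weights are equal by assumption ($k_i = k_j$). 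That common value $k_I$ factors out, turning the sum into $k_I\,\boldsymbol{u}\cdot\nabla p$, which integrates to zero by incompressibility and the boundary conditions exactly as before. I expect this pressure elimination to be the main obstacle, since it is the one step that would fail for a generic weighted norm and thereby explains why the functional \eqref{eq:Lyap} must weight the two non-invariant components equally. Collecting the surviving terms then yields \eqref{eq:Lyapmaindt} directly.
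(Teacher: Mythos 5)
Your proof is correct and takes essentially the same route as the paper's: the self-advection term is eliminated via $u_i\partial_{x_j}u_i=\tfrac{1}{2}\partial_{x_j}(u_i^2)$, integration by parts, and incompressibility; the pressure term is eliminated using $x_m$-invariance (which kills the $i=m$ summand) together with the equal weights $k_i=k_j$ on $I$ and incompressibility; and the viscous term is bounded by integration by parts followed by the Poincar\'e inequality, which is indeed the only inequality step. The only (immaterial) difference is that you set $\boldsymbol{d}\equiv 0$, which matches \eqref{eq:Lyapmaindt} as literally stated, whereas the paper carries the forcing term $k_iu_id_i$ through the computation, a version that is implicitly relied upon when \eqref{eq:Lyapmaindt} is invoked for the input-to-state results in Corollary~\ref{cor1}.
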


\begin{proof}
 The time derivative of  Lyapunov functional~\eqref{eq:Lyap} along the solutions of~\eqref{eq:mainNSEin} can be computed as
\begin{multline} \label{eq:Lyapdt}
\partial_t V(\boldsymbol{u}) = \sum_{i=1}^3\iint_\Omega k_i \bigg( \frac{1}{Re} u_i\nabla^2 u_i - u_j  u_i \partial_{x_j} u_i \\- U_j  u_i \partial_{x_j} u_i - u_j u_i  \partial_{x_j} U_i - u_i \partial_{x_i} p + u_i F_{ij} u_j +u_id_i\bigg) \,\, \mathrm{d}\Omega.
\end{multline}
Consider  $\iint_\Omega  k_i u_j  u_i \partial_{x_j} u_i\,\, \mathrm{d}\Omega$. Using the  boundary conditions, integration by parts and the incompressibility condition $ \partial_{x_j} u_j = 0$, we obtain
\begin{multline}
\iint_\Omega  k_i u_j  u_i \partial_{x_j} u_i\,\, \mathrm{d}\Omega = \frac{1}{2}\int_{ \Omega_i}k_i u_j u_i^2|_{\partial \Omega_j} \,\, \mathrm{d}x_i \\ - \frac{1}{2}\iint_\Omega  k_i  u_i^2 \left( \partial_{x_j}  u_j  \right) \,\, \mathrm{d}\Omega = 0.
\end{multline}
At this point, consider the pressure terms $\iint_\Omega k_i u_i \partial_{x_i} p \,\, \mathrm{d}\Omega$. Without loss of generality, we consider invariance in $x_1$, which yields
\begin{multline} \label{fsfstttd}
\iint_\Omega \left(k_2 u_2 \partial_{x_2} p + k_3 u_3 \partial_{x_3} p\right) \,\, \mathrm{d}\Omega \\ = \int_{\Omega_3} (k_2 u_2 p)|_{\partial\Omega_2} \,\, \mathrm{d}x_3 + \int_{\Omega_2}(k_3 u_3 p)|_{\partial\Omega_3} \,\, \mathrm{d}x_2\\-\iint_\Omega \left(k_2 \partial_{x_2} u_2  p + k_3 \partial_{x_3}  u_3 p\right) \,\, \mathrm{d}\Omega \\ = -\iint_\Omega \left(k_2 \partial_{x_2} u_2 + k_3 \partial_{x_3}  u_3 \right) p \,\, \mathrm{d}\Omega,
\end{multline}
where in the first equality above integration by parts and in the second inequality     the boundary conditions are used. Then, if $k_2=k_3$, using the incompressibility condition $\partial_{x_2} u_2 +  \partial_{x_3}  u_3=0$,  \eqref{fsfstttd} equals zero. Therefore, the time derivative of the Lyapunov/storage functional~\eqref{eq:Lyapdt} is modified to 
\begin{multline} \label{eq:Lyapmaindt1}
\partial_t V(\boldsymbol{u}) = \sum_{i=1}^3\iint_\Omega k_i \bigg( \frac{1}{Re} u_i\nabla^2 u_i  - U_j  u_i \partial_{x_j} u_i \\- u_j u_i  \partial_{x_j} U_i  + u_i F_{ij} u_j +u_id_i\bigg) \,\, \mathrm{d}\Omega.
\end{multline} 
Integrating by parts the $u_i\nabla^2 u_i$ term and using the   boundary conditions, we get
\begin{multline} \label{eq:Lyapmaindt2}
\partial_t V(\boldsymbol{u}) = \sum_{i=1}^3\iint_\Omega k_i \bigg( \frac{1}{Re} (\partial_{x_i}u_i)^2 - U_j  u_i \partial_{x_j} u_i \\- u_j u_i  \partial_{x_j} U_i  + u_i F_{ij} u_j + u_id_i \bigg) \,\, \mathrm{d}\Omega.
\end{multline} 
 Applying Poincar\'e inequality (Lemma 1 in Appendix~A) to~\eqref{eq:Lyapmaindt2}, we obtain \eqref{eq:Lyapmaindt}.
\end{proof}

\begin{rem}
A special case of~\eqref{eq:Lyap} was used in~\cite{JH71} to study the stability of viscous fluid flows. 
\end{rem}

\begin{rem}
In the sequel, we use   structure \eqref{eq:Lyap}  as a Lyapunov functional when studying stability and as a storage functional when studying input-to-state properties. 
\end{rem}

\begin{rem}
There are several estimates for the optimal  Poincar\'e constant. The optimal constant we use in this paper is 
\begin{equation}
C(\Omega) = \frac{\pi^2}{D(\Omega)},
\end{equation}
where $D(\Omega)$ is the diameter of the domain $\Omega$~\cite{PW60}.
\end{rem}

The next corollary  proposes conditions under which  properties such as stability, input-state induced $\mathcal{L}^2$ bounds and ISS can be inferred for the flow described by~\eqref{eq:mainNSEin}.

\begin{cor} \label{cor1}
Consider the flow described by~\eqref{eq:mainNSEin} subject to periodic or     no-slip boundary conditions $\boldsymbol{u}|_{\partial\Omega} = 0$. Assume the flow is invariant with respect to $x_m$, $m\in \{1,2,3\}$. Let $I=\{1,2,3\}-\{m\}$. If there exist positive constants $k_i$, $i=1,2,3$, with $k_i=k_j$, $i,j \in I$, positive scalars $\{\psi_i\}_{i\in \{1,2,3\}}$, $\{\eta\}_{i\in \{1,2,3\}}$, and $\sigma \in \mathcal{K}$ such that\\
\mbox{I)} when $\boldsymbol{d}\equiv 0$,
\begin{eqnarray} \label{eq:con1}
\sum_{i=1}^3k_i\iint_\Omega  \bigg( \frac{ C(\Omega)}{Re} u_i^2   +  U_j  u_i \partial_{x_j} u_i  \nonumber \\+u_j u_i  \partial_{x_j} U_i   -  u_i F_{ij} u_j \bigg) \,\, \mathrm{d}\Omega > 0
\end{eqnarray}
\mbox{II)}
\begin{multline}\label{eq:conL22}
\sum_{i=1}^3\iint_\Omega  \bigg( \left(\frac{ k_i C(\Omega)}{Re}-1\right) u_i^2   +  k_iU_j  u_i \partial_{x_j} u_i    \\+k_iu_j u_i  \partial_{x_j} U_i  -  k_iu_i F_{ij} u_j -k_iu_id_i + \eta_i^2 d_i^2\bigg) \,\, \mathrm{d}\Omega \ge 0
\end{multline}
\mbox{III)}
\begin{multline}\label{eq:conISS}
\sum_{i=1}^3 \iint_\Omega  \bigg( \left(\frac{ k_i C(\Omega)}{Re}-\psi_i k_i\right) u_i^2   +  k_iU_j  u_i \partial_{x_j} u_i    \\+k_iu_j u_i  \partial_{x_j} U_i  -  k_iu_i F_{ij} u_j \\-k_iu_id_i +\sigma(|d_1|,|d_2|,|d_3|)\bigg) \,\, \mathrm{d}\Omega \ge 0
\end{multline}
Then, \\
\mbox{I)} perturbation velocities given by~\eqref{eq:mainNSEin} are exponentially stable. Therefore, the flow converges to the laminar flow exponentially.\\
\mbox{II)} under zero perturbation initial conditions $\boldsymbol{u}(0,\mathrm{x})\equiv 0$, the induced $\mathcal{L}^2$ norm from inputs to perturbation velocities is bounded by $\eta_i$, $i\in \{1,2,3\}$ as in~\eqref{eq:L2}.\\
\mbox{III)} the perturbation velocities described by~\eqref{eq:mainNSEin} are ISS in the sense of~\eqref{eq:iss}.
\end{cor}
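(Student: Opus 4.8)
The plan is to invoke Theorem~\ref{bigthm} with the \emph{single} quadratic functional supplied by Proposition~\ref{prop1} serving simultaneously as the Lyapunov functional $V$ and the storage functional $S$. Concretely, I take $V(\boldsymbol{u})=S(\boldsymbol{u})$ equal to the functional~\eqref{eq:Lyap}, with $k_i>0$ and $k_i=k_j$ for $i,j\in I$ so that the hypotheses of Proposition~\ref{prop1} are met. The sandwich bounds~\eqref{sda1} and~\eqref{e11} are then immediate: since each $k_i>0$, one has $\tfrac12(\min_i k_i)\|\boldsymbol{u}\|^2_{\mathcal{L}^2_\Omega}\le V(\boldsymbol{u})\le\tfrac12(\max_i k_i)\|\boldsymbol{u}\|^2_{\mathcal{L}^2_\Omega}$, which furnishes $c_1,c_2$ and the $\mathcal{K}_\infty$ functions $\beta_1(s)=\tfrac12(\min_i k_i)s^2$, $\beta_2(s)=\tfrac12(\max_i k_i)s^2$. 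Everything therefore reduces to matching the three dissipation inequalities~\eqref{sddsf1},~\eqref{e6},~\eqref{e12} of Theorem~\ref{bigthm}.

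For the derivative I would not use the forcing-free estimate~\eqref{eq:Lyapmaindt}, but rather the intermediate identity~\eqref{eq:Lyapmaindt2} from the proof of Proposition~\ref{prop1}, keeping the forcing contribution and then applying the Poincar\'e inequality to the viscous term exactly as there. This produces the uniform bound
\begin{multline*}
\partial_t V(\boldsymbol{u})\le-\sum_{i=1}^3 k_i\iint_\Omega\Big(\tfrac{C(\Omega)}{Re}u_i^2+U_j u_i\partial_{x_j}u_i\\+u_j u_i\partial_{x_j}U_i-u_i F_{ij}u_j\Big)\,\mathrm{d}\Omega+\sum_{i=1}^3 k_i\iint_\Omega u_i d_i\,\mathrm{d}\Omega,
\end{multline*}
which collapses to~\eqref{eq:Lyapmaindt} when $\boldsymbol{d}\equiv0$. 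This single inequality drives all three parts.

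With this bound, parts~II and~III are bookkeeping. For part~II I would require $\partial_t V$ to be at most the right-hand side of~\eqref{e6}; moving every term to one side and multiplying by $-1$, the combination $\tfrac{k_iC}{Re}u_i^2-u_i^2=(\tfrac{k_iC}{Re}-1)u_i^2$ appears and the requirement becomes \emph{exactly}~\eqref{eq:conL22}. Hence~\eqref{eq:conL22} is equivalent to~\eqref{e6}, and Theorem~\ref{bigthm} yields the induced $\mathcal{L}^2$ bounds~\eqref{eq:L2}. For part~III I compare the derivative bound with $-\psi V+\iint_\Omega\sigma\,\mathrm{d}\Omega$; since $-\psi V=-\tfrac{\psi}{2}\sum_i k_i\iint_\Omega u_i^2\,\mathrm{d}\Omega$, the coefficient of $u_i^2$ is $\tfrac{k_iC}{Re}-\tfrac{\psi}{2}k_i$, and choosing $\psi=2\min_i\psi_i$ makes this no smaller than the coefficient $\tfrac{k_iC}{Re}-\psi_i k_i$ in~\eqref{eq:conISS}. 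Matching the $\mathcal{K}$-function $\sigma$ of Theorem~\ref{bigthm} to a harmless constant multiple of the one in~\eqref{eq:conISS} then shows that~\eqref{eq:conISS} implies~\eqref{e12}, so Theorem~\ref{bigthm} delivers the ISS property~\eqref{eq:iss}.

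The one genuinely delicate step is part~I. With $\boldsymbol{d}\equiv0$ the derivative bound plus the strict inequality~\eqref{eq:con1} controls only the \emph{sign} of $\partial_t V$, whereas~\eqref{sddsf1} demands the coercive estimate $\partial_t V\le-c_3\|\boldsymbol{u}\|^2_{\mathcal{L}^2_\Omega}$ with a fixed $c_3>0$. I would bridge this gap by reading~\eqref{eq:con1} as positive-definiteness of the pointwise coefficient matrix of the quadratic form in $\boldsymbol{u}$ — the reading that the matrix-inequality formulation of the next section makes operative. Incompressibility of $\boldsymbol{U}$ removes the term $\iint_\Omega U_j u_i\partial_{x_j}u_i\,\mathrm{d}\Omega$ (it integrates to zero), leaving a genuine derivative-free form $\iint_\Omega\boldsymbol{u}'M(\mathrm{x})\boldsymbol{u}\,\mathrm{d}\Omega$ with $M$ built from $\tfrac{C(\Omega)}{Re}$, $\nabla\boldsymbol{U}$ and $F$; a uniform bound $M(\mathrm{x})\succeq c_3\,\mathrm{I}_{3\times3}$ then gives~\eqref{sddsf1}, and part~I follows from Theorem~\ref{bigthm}. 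This coercivity extraction is where the real work sits; the remaining substitutions are routine.
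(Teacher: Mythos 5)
Your proposal is correct and follows essentially the same route as the paper's own proof: take \eqref{eq:Lyap} as the common Lyapunov/storage functional, obtain the sandwich bounds \eqref{sda1} and \eqref{e11} from $\min_i k_i$ and $\max_i k_i$, and rearrange \eqref{eq:conL22} and \eqref{eq:conISS} into the dissipation inequalities \eqref{e6} and \eqref{e12} of Theorem~\ref{bigthm} (your choice $\psi=2\min_i\psi_i$ versus the paper's $\psi=\min_i\psi_i$ is immaterial: because of the factor $\tfrac12$ in $V$, both are admissible). The differences lie in rigor at two points, both in your favor. First, since \eqref{eq:Lyapmaindt} is stated without the forcing term, you rederive the derivative bound from \eqref{eq:Lyapmaindt2} retaining $\sum_i k_i\iint_\Omega u_i d_i\,\mathrm{d}\Omega$; the paper's proof of Items II and III silently uses this with-forcing version of Proposition~\ref{prop1} without comment, so your derivation fills that small inconsistency. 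Second, for Item I the paper passes from the strict inequality $\partial_t V<0$ to the coercive bound \eqref{sddsf1} ``by continuity''; this inference is not valid in an infinite-dimensional state space, since a strictly negative quadratic form need not admit a uniform bound $-c_3\|\boldsymbol{u}\|^2_{\mathcal{L}^2_\Omega}$ (for instance when the least eigenvalue of the pointwise coefficient matrix degenerates somewhere in $\overline{\Omega}$). You identify exactly this soft spot and close it by demanding a uniform pointwise bound $M(\mathrm{x})\succeq c_3\,\mathrm{I}_{3\times3}$ on the derivative-free quadratic form that remains after the convective term $\iint_\Omega U_j u_i \partial_{x_j} u_i\,\mathrm{d}\Omega$ is annihilated by incompressibility of $\boldsymbol{U}$ together with the boundary conditions. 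Be aware that this proves Item I under a pointwise hypothesis that is strictly stronger than the integral inequality \eqref{eq:con1} as literally stated; it is, however, precisely the condition that Corollary~\ref{LMIcor} checks in practice, so your reading repairs a genuine weakness in the paper's argument rather than introducing a detour.
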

\begin{proof}
Each item is proven as follows. \\
\mbox{I)} Considering Lyapunov functional~\eqref{eq:Lyap}, inequality~\eqref{sda1} is satisfied with $c_1=\min_{i \in \{1,2,3\}} k_i$ and $c_2=\max_{i \in \{1,2,3\}} k_i$. Re-arranging the terms in~\eqref{eq:con1} gives
\begin{eqnarray}
-\sum_{i=1}^3 k_i\iint_\Omega  \bigg( \frac{ C(\Omega)}{Re} u_i^2   +  U_j  u_i \partial_{x_j} u_i   +u_j u_i  \partial_{x_j} U_i  \nonumber \\-  u_i F_{ij} u_j -u_id_i\bigg) \,\, \mathrm{d}\Omega < 0.
\end{eqnarray}
Then, from Proposition~\ref{prop1}, we infer that, for $d \equiv 0$, \mbox{$\partial_t V(\boldsymbol{u}) < 0$}. By continuity, we infer that there exists $c_3>0$ such that~\eqref{sddsf1} holds. Then, form Item I in Theorem 1, we infer that the perturbation velocities are exponentially stable. \\
\mbox{II)} Re-arranging terms in \eqref{eq:conL22} yields 
\begin{multline}\label{eq:conL2}
- \sum_{i=1}^3k_i\iint_\Omega  \bigg( \frac{  C(\Omega)}{Re} u_i^2   +  U_j  u_i \partial_{x_j} u_i    +u_j u_i  \partial_{x_j} U_i\\  -  u_i F_{ij} u_j -u_id_i \bigg) \,\, \mathrm{d}\Omega \\\le -\sum_{i=1}^3\iint_\Omega u_i^2 \,\, \mathrm{d}\Omega + \sum_{i=1}^3\iint_\Omega \eta_i^2 d_i^2\,\, \mathrm{d}\Omega
\end{multline}
Then, from \eqref{eq:Lyapmaindt} in Proposition~\ref{prop1}, we deduce that
$$
\partial_tV(\boldsymbol{u}) \le -\sum_{i=1}^3\iint_\Omega u_i^2 \,\, \mathrm{d}\Omega + \sum_{i=1}^3\iint_\Omega \eta_i^2 d_i^2\,\, \mathrm{d}\Omega.
$$
From Item II in Theorem 1, we infer that, under zero initial conditions, the perturbation velocities satisfy \eqref{eq:L2}.\\
\mbox{III)} Adopting~\eqref{eq:Lyap} as a storage functional,~\eqref{e11} is satisfied with $\beta_1(\cdot)=\min_{i\in \{1,2,3\}}k_i (\cdot)^2$ and $\beta_2(\cdot)=\max_{i\in \{1,2,3\}}k_i (\cdot)^2$. 
Re-arranging the terms in \eqref{eq:conISS}, we obtain
\begin{multline}
-\sum_{i=1}^3\iint_\Omega  \bigg( \frac{ k_i C(\Omega)}{Re}u_i^2   +  k_iU_j  u_i \partial_{x_j} u_i    +k_iu_j u_i  \partial_{x_j} U_i\\  -  k_iu_i F_{ij} u_j -k_iu_id_i \bigg) \,\, \mathrm{d}\Omega \\ \le -\sum_{i=1}^3\psi_i \iint_\Omega k_i u_i^2 \,\, \mathrm{d}\Omega + \iint_\Omega \sigma(|d_1|,|d_2|,|d_3|)\,\, \mathrm{d}\Omega
\end{multline}
From \eqref{eq:Lyapmaindt} in Proposition~\ref{prop1}, it follows that 
\begin{multline}
\partial_t V(\boldsymbol{u})  \le - \psi V(\boldsymbol{u}) + \iint_\Omega \sigma(|d_1|,|d_2|,|d_3|)\,\, \mathrm{d}\Omega,
\end{multline}
with $\psi= \min_{i\in\{1,2,3\}} \psi_i$. Then, from Item III in Theorem~1, we infer that the perturbation velocities satisfy the ISS property~\eqref{eq:iss}.
\end{proof}

%
 
 \section{Convex Formulation for Streamwise Constant Flows} \label{sec:convex}
 
 To present a convex method for checking the conditions in Corollary~\ref{cor1}, we restrict our attention to streamwise constant flows in $x_m$-direction with laminar flow $\boldsymbol{U} = U_m(\mathrm{x}) \overrightarrow{e}_m$. 
  
 \begin{cor} \label{LMIcor}
 Consider the perturbation dynamics given by~\eqref{eq:mainNSEin}. Assume streamwise invariance in $x_m$-direction with laminar flow $\boldsymbol{U} = U_m(\mathrm{x}) \overrightarrow{e}_m$ where $m \in \{1,2,3\}$. Let $I=\{1,2,3\}-\{m\}$. If there exist positive constants $\{k_l\}_{l \in \{1,2,3\}}$ with $k_p=k_q$, $p,q\in I$, $\{\eta_l\}_{l \in \{1,2,3\}}$,  $\{\psi_l\}_{l \in \{1,2,3\}}$, and functions $\{\sigma_l\}_{l \in \{1,2,3\}}$ such that \\
  \begin{figure*}[!t]
{
 \begin{equation} \label{eq:Mmat}
M(\mathrm{x}) = \begin{bmatrix} 
 \left(\frac{C}{Re}-F_{mm} \right)k_m &  \frac{k_m(\partial_{x_j}U_m(\mathrm{x})-F_{mj})-k_jF{jm}}{2}  & \frac{k_m(\partial_{x_i}U_m(\mathrm{x})-F_{mi})-k_iF{im}}{2}   \\  \frac{k_m(\partial_{x_j}U_m(\mathrm{x})-F_{mj})-k_jF{jm}}{2} &  \left(\frac{C}{Re}-F_{jj} \right)k_j & -\frac{k_j F_{jm}}{2} \\ \frac{k_m(\partial_{x_i}U_m(\mathrm{x})-F_{mi})-k_iF{im}}{2} & -\frac{k_j F_{jm}}{2}  & \left(\frac{C}{Re}-F_{ii} \right)k_i
 \end{bmatrix} \ge 0,~i,j \in I, i\neq j,~\mathrm{x}\in\Omega.
 \end{equation}
\hrulefill
\vspace*{4pt}}
\end{figure*}
 \mbox{I)} \eqref{eq:Mmat} holds,\\
 \mbox{II)} 
 \begin{multline} \label{eq:Nmat}
N(\mathrm{x}) =\\ \begin{pmat}[{..|}] 
~ & ~  & ~ & -\frac{k_m}{2} & 0 & 0\cr 
~ & M(\mathrm{x})-\mathrm{I}_{3\times 3} & ~ & 0 & -\frac{k_j}{2} & 0 \cr 
~ & ~& ~ & 0 & 0 & -\frac{k_i}{2} \cr\-
-\frac{k_m}{2} & 0 & 0  & \eta_{m}^2 & 0 & 0 \cr
0 & -\frac{k_j}{2} & 0  & 0 & \eta_{i}^2  & 0   \cr
0 & 0 & -\frac{k_i}{2}  & 0 & 0 & \eta_{j}^2  \cr
\end{pmat} \ge 0,
\end{multline}
for $i,j \in I, i\neq j$ and $\mathrm{x}\in \Omega$,\\
\mbox{III)} $\sigma_l(\mathrm{x}) \ge 0,~\mathrm{x} \in \Omega$, $l \in \{1,2,3\}$ and 
\begin{multline}\label{eq:Pmat}
P(\mathrm{x}) =\\\begin{pmat} [{..|}]  ~ & ~  & ~ & -\frac{k_m}{2} & 0 & 0 \cr
 ~ & M(\mathrm{x})-Q & ~ & 0 & -\frac{k_j}{2} & 0 \cr
~ & ~ & ~ & 0 & 0 & -\frac{k_i}{2} \cr\-
 -\frac{k_m}{2} & 0 & 0  & \sigma_{m}(\mathrm{x}) & 0 & 0 \cr
0 & -\frac{k_j}{2} & 0  & 0 & \sigma_{j}(\mathrm{x})  & 0   \cr
0 & 0 & -\frac{k_i}{2}  & 0 & 0 & \sigma_{i}(\mathrm{x}) \cr \end{pmat} \ge 0,
\end{multline}
for  $i,j \in I, i\neq j$ and $\mathrm{x}\in \Omega$, where $Q=\left[\begin{smallmatrix}\psi_mk_m & 0 & 0\\0&\psi_jk_j&0\\0&0&\psi_i k_i\end{smallmatrix} \right]$. Then, it follows that \\
\mbox{I)} the perturbation velocities are exponentially stable,\\
\mbox{II)} subject to zero initial conditions, the induced  $\mathcal{L}^2$ norm from inputs to perturbation velocities is bounded by $\eta_i$, \mbox{$i= 1,2,3$} as in~\eqref{eq:L2},\\
\mbox{III)} the perturbation velocities are ISS in the sense of~\eqref{eq:iss} with $\sigma(|\boldsymbol{d}|) =  \sum_{i=1}^3 \sigma_i(\mathrm{x}) d_i^2$.
  \end{cor}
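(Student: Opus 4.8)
The plan is to reduce each of the three integral inequalities of Corollary~\ref{cor1} to the pointwise positive semidefiniteness of the matrices $M(\mathrm{x})$, $N(\mathrm{x})$ and $P(\mathrm{x})$, and then to read off the three conclusions directly from the corresponding items of Corollary~\ref{cor1}. The entire argument rests on first exploiting the streamwise-constant structure $\boldsymbol{U}=U_m(\mathrm{x})\overrightarrow{e}_m$ to collapse the convection and shear-production terms, after which each integrand becomes a quadratic form at each point $\mathrm{x}$.

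The first step is to substitute $U_p=0$ for every $p\in I$ and use the invariance $\partial_{x_m}(\cdot)=0$. The transport term vanishes entirely, since $U_j u_i \partial_{x_j}u_i = U_m u_i \partial_{x_m}u_i = 0$, and the production term $u_j u_i \partial_{x_j}U_i$ retains only the $i=m$ contribution, which reduces to $u_m\sum_{p\in I}u_p\,\partial_{x_p}U_m$ because $\partial_{x_m}U_m=0$. Thus the integrand of~\eqref{eq:con1} keeps only the diffusion term $\tfrac{C}{Re}\sum_l k_l u_l^2$, the production term $k_m u_m\sum_{p\in I}u_p\,\partial_{x_p}U_m$, and the quadratic forcing term $-\sum_{l,p}k_l F_{lp}u_l u_p$.

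The second step is to recognise this integrand, pointwise in $\mathrm{x}$, as the quadratic form $\boldsymbol{u}'M(\mathrm{x})\boldsymbol{u}$ in the ordered vector $(u_m,u_j,u_i)'$: the diagonal collects the diffusion coefficients $\tfrac{C}{Re}k_l$ together with the self-forcing terms $-k_l F_{ll}$, while each off-diagonal entry is the symmetrisation of the relevant production coefficient $k_m\,\partial_{x_p}U_m$ and cross-forcing coefficient $-k_l F_{lp}-k_p F_{pl}$; collecting these is exactly how the entries of $M(\mathrm{x})$ in~\eqref{eq:Mmat} are assembled. Hence $M(\mathrm{x})\ge 0$ for all $\mathrm{x}\in\Omega$ makes the integrand nonnegative pointwise, so the integral in~\eqref{eq:con1} is nonnegative. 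The strict positivity that item~I of Corollary~\ref{cor1} actually demands then follows from the residual viscous dissipation left unused by the (sharp) Poincar\'e estimate of Proposition~\ref{prop1}, giving $\partial_t V<0$ for $\boldsymbol{u}\not\equiv0$, and item~I yields exponential stability.

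The third step repeats this reduction with the input present, now using the extended vector $(u_m,u_j,u_i,d_m,d_j,d_i)'$. Inequality~\eqref{eq:conL22} differs from~\eqref{eq:con1} only through the extra $-\sum_l u_l^2$ (which replaces the $\boldsymbol{u}$-block $M$ by $M-\mathrm{I}_{3\times3}$), the coupling $-k_l u_l d_l$ (producing the off-diagonal blocks $-\tfrac{k_l}{2}$), and the supply $\sum_l \eta_l^2 d_l^2$ (the $\boldsymbol{d}$-block); assembling these gives $N(\mathrm{x})$, so $N(\mathrm{x})\ge0$ implies~\eqref{eq:conL22} and item~II follows. Replacing $\mathrm{I}_{3\times3}$ by $Q=\mathrm{diag}(\psi_m k_m,\psi_j k_j,\psi_i k_i)$ and the supply by $\sum_l \sigma_l(\mathrm{x})d_l^2$ produces $P(\mathrm{x})$, so $P(\mathrm{x})\ge0$ with $\sigma_l(\mathrm{x})\ge0$ gives~\eqref{eq:conISS} with $\sigma(|\boldsymbol{d}|)=\sum_l \sigma_l(\mathrm{x})d_l^2$, and item~III delivers ISS. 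The main obstacle I anticipate is the careful bookkeeping that turns the convection and forcing contributions into the precise symmetric entries of $M$, $N$ and $P$, in particular tracking which $k_l$ multiplies which $F_{lp}$ before symmetrisation and confirming that relabelling the two indices of $I$ leaves the $i\neq j$ matrices consistent. A secondary but genuine subtlety is the gap between the pointwise inequality $M(\mathrm{x})\ge0$, which only yields a nonstrict integral, and the strict decrease needed for exponential stability, which I would close by invoking the strict Poincar\'e dissipation rather than the matrix inequality alone; I would also verify that $\sum_l \sigma_l(\mathrm{x})d_l^2$ can legitimately serve as the class-$\mathcal{K}$ supply required by the ISS definition.
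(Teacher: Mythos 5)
Your proposal follows essentially the same route as the paper's proof: exploit the streamwise invariance and $\boldsymbol{U}=U_m\overrightarrow{e}_m$ to annihilate the transport term $U_j u_i\partial_{x_j}u_i$, rewrite each integrand of Corollary~\ref{cor1} as a pointwise quadratic form in $(u_m,u_j,u_i)'$ (respectively the augmented vector with $(d_m,d_j,d_i)'$) whose matrix is $M$, $N$ or $P$, and conclude from the corresponding items of Corollary~\ref{cor1}. The two subtleties you flag --- the gap between the nonstrict pointwise inequality $M(\mathrm{x})\ge0$ and the strict inequality~\eqref{eq:con1} demanded by item~I, and whether $\sum_l\sigma_l(\mathrm{x})d_l^2$ legitimately serves as the class-$\mathcal{K}$ supply --- are glossed over in the paper's own (very terse) proof, so your treatment is, if anything, more careful.
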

  \begin{proof} 
  The proof is straightforward and  follows from computing conditions~\eqref{eq:con1},~\eqref{eq:conL2}, and~\eqref{eq:conISS} considering $x_m$-invariance,  the laminar flow $\boldsymbol{U} = U_m \overrightarrow{e}_m$, and $\sigma(|\boldsymbol{d}|) =  \sum_{i=1}^3 \sigma_i(\mathrm{x}) d_i^2$. Since the flow is $x_m$-invariant and  the laminar flow is given by $\boldsymbol{U} = U_m \overrightarrow{e}_m$, $U_j  u_i \partial_{x_j} u_i=0$, $i=1,2,3$. \\
  \mbox{I)} Inequality~\eqref{eq:con1} is given by
  \begin{multline} 
\mathcal{A} =\\ \iint_\Omega  \bigg( \left(\frac{ C(\Omega)}{Re}-F_{ii}\right)k_i u_i^2  - u_i(k_iF_{ij})u_j -u_i(k_iF_{im})u_m \\
+\left(\frac{ C(\Omega)}{Re}-F_{jj}\right)k_j u_j^2  - u_j(k_jF_{ji})u_i -u_j(k_jF_{jm})u_m \\
\left(\frac{ C(\Omega)}{Re}-F_{mm}\right)k_m u_m^2  + u_m(\partial_{x_i}U_m -F_{mi})u_i \\+ u_m(\partial_{x_j}U_m - F_{mj})
 \bigg) \,\, \mathrm{d}\Omega \ge 0 
\end{multline}
for $i,j \in I$, $i\neq j$, which can be rewritten as
\begin{equation} \label{p2323}
\iint_\Omega \left[ \begin{smallmatrix} u_m \\ u_j \\ u_i  \end{smallmatrix} \right]^\prime M(\mathrm{x}) \left[ \begin{smallmatrix} u_m \\ u_j \\ u_i  \end{smallmatrix} \right] \,\, \mathrm{d}\Omega \ge 0.
\end{equation}
with $M(\mathrm{x})$ given in~\eqref{eq:Mmat}. Therefore, if \eqref{eq:Mmat} is satisfied, \eqref{p2323} also holds and from Item I in Corollary~\ref{cor1} we infer that the perturbation velocities are exponentially stable.\\
\mbox{II)} Inequality~\eqref{eq:conL2} is changed to 
\begin{multline}
\mathcal{A} +\iint_\Omega (k_iu_id_i+k_ju_jd_j+k_mu_md_m)\,\, \mathrm{d}\Omega\\-\iint_\Omega (u_i^2+u_j^2+u_m^2)\,\, \mathrm{d}\Omega \\+ \iint_\Omega (\eta_i^2d_i^2+\eta_j^2d_j^2+\eta_m^2d_m^2)\,\, \mathrm{d}\Omega \ge 0,
\end{multline}
for $i,j \in I$, $i\neq j$, which can be rewritten as
\begin{equation} \label{jhjlhljll}
\iint_\Omega \left[ \begin{smallmatrix} u_m \\ u_j \\ u_i \\ d_m \\ d_j \\ d_i  \end{smallmatrix} \right]^\prime N(\mathrm{x}) \left[ \begin{smallmatrix} u_m \\ u_j \\ u_i  \\ d_m \\ d_j \\ d_i\end{smallmatrix} \right] \,\, \mathrm{d}\Omega \ge 0,
\end{equation}
where $N$ is defined in~\eqref{eq:Nmat}. Consequently, if \eqref{eq:Nmat}  is satisfied for all $\mathrm{x}\in \Omega$, \eqref{jhjlhljll}  holds and from Item II in Corollary~\ref{cor1} we infer that, subject to zero initial conditions, the induced  $\mathcal{L}^2$ norm from inputs to perturbation velocities is bounded by $\eta_i$, \mbox{$i= 1,2,3$} as in~\eqref{eq:L2}.\\\
\mbox{III)} The proof follows the same lines as the proof of Item II above.
  \end{proof}
  
  In the case that $U_m(\mathrm{x})$ is a polynomial in $\mathrm{x}$, inequalities~\eqref{eq:Mmat},~\eqref{eq:Nmat}, and~\eqref{eq:Pmat} are polynomial matrix inequalities that should be checked for all $\mathrm{x} \in \Omega$. If the set $\Omega$ is a semi-algebraic set then these inequalities can be cast as a sum-of-squares (SOS) program (see Appendix B) by using Putinar's Positivstellensatz theorem~\cite[Theorem 2.14]{Las09}.

  \begin{rem}
 In order to find upper-bounds on the induced $\mathcal{L}^2$-norm from the body forces $(d_1,d_2,d_3)$ to the perturbation velocities $\boldsymbol{u}$, we solve the following optimization problem
\begin{eqnarray}
&\min_{k_i,k_j} (\eta_1^2+\eta_2^2+\eta_3^2) & \nonumber \\
&subject~to~N(\mathrm{x})\ge 0,~k_i,k_j>0,~i,j \in I.&
\end{eqnarray}
  \end{rem}
  
   In the next section, we consider the analysis of the rotating Couette flow, which illustrate the proposed results.
  
\begin{figure}[!t]

\centering{
                \includegraphics[width=6.29cm]{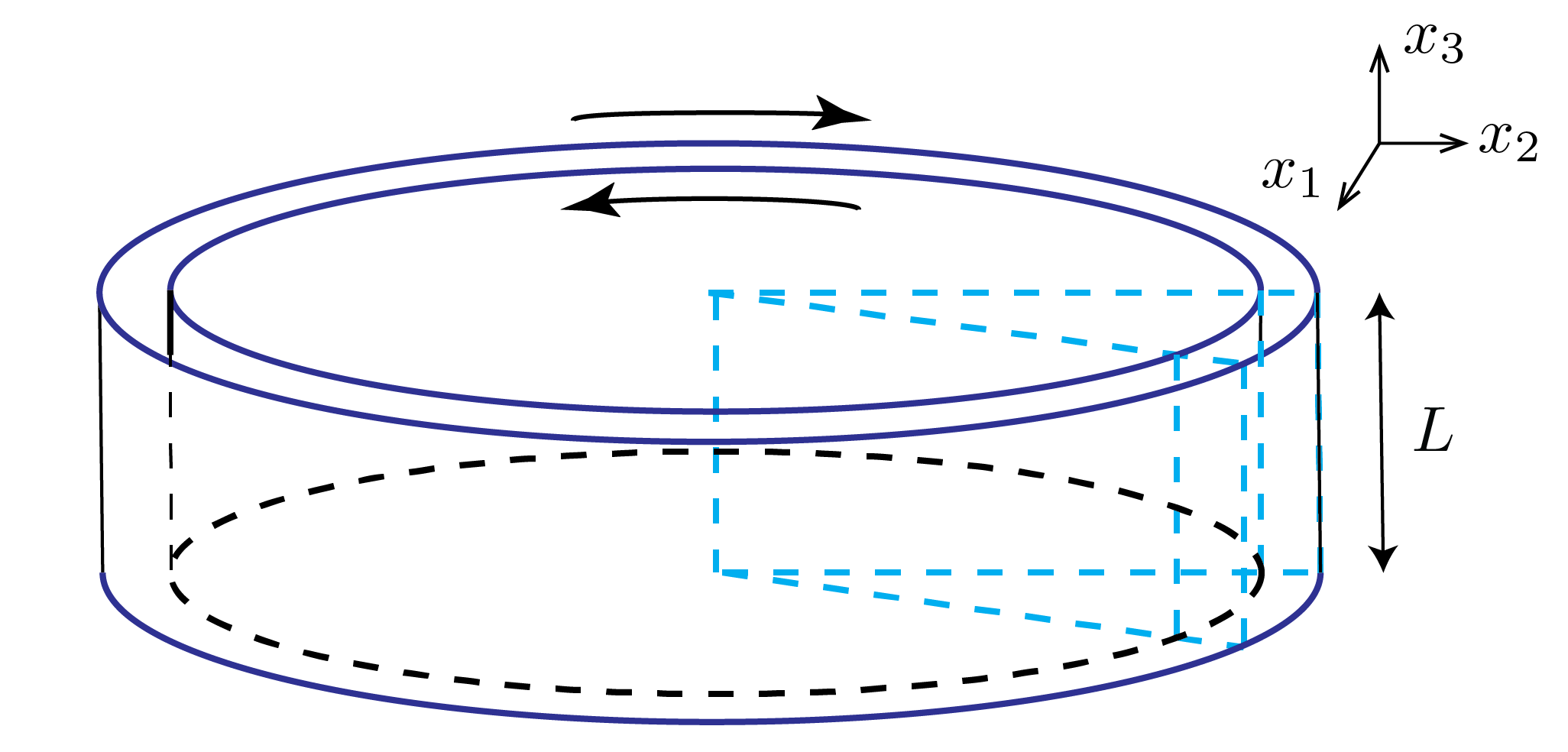}\\
                \includegraphics[width=6.29cm]{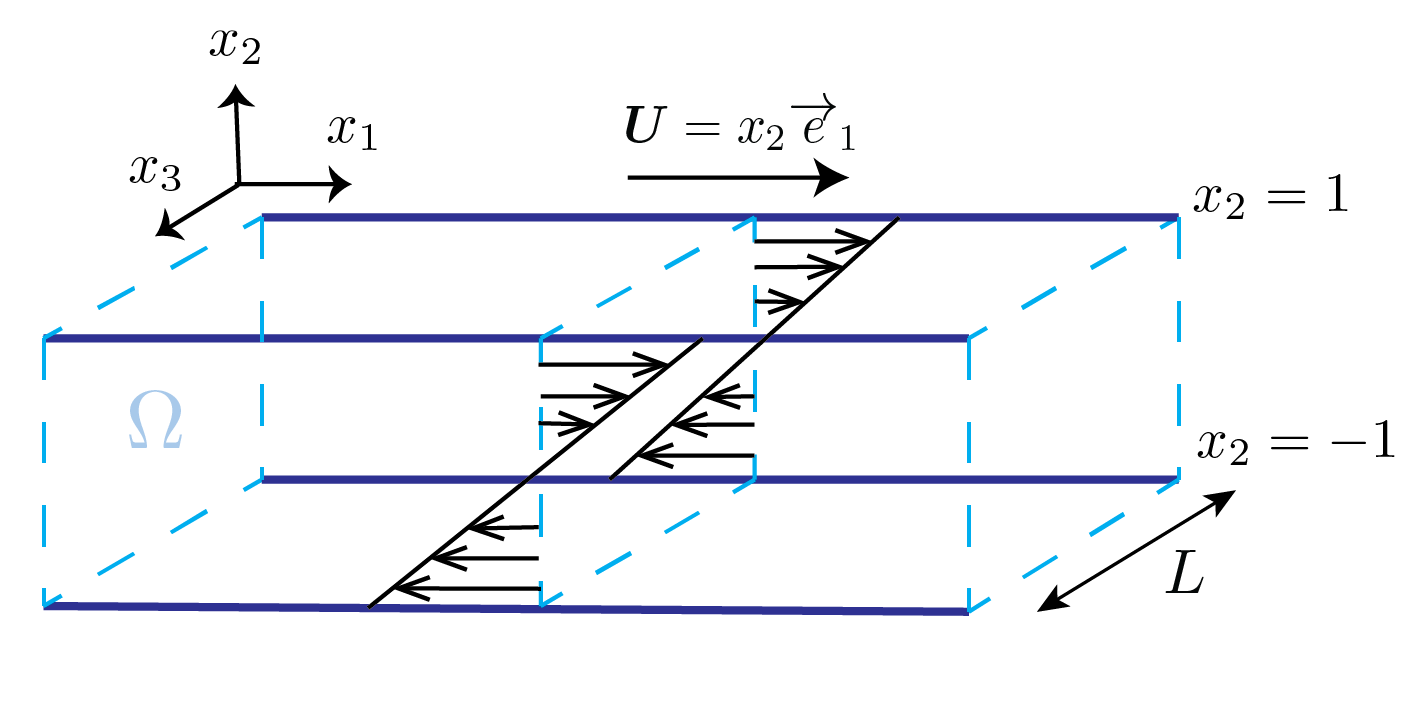}}

        \caption{Schematic of the rotating Couette flow geometry.}\label{fig1}
\end{figure}

\section{Example: Rotating Couette Flow} \label{sec:NR}

We consider the  flow of viscous fluid between two co-axial cylinders, where the gap between the cylinders is much smaller than  their radii. In this setting, the flow can be schematically illustrated as in Figure~\ref{fig1}. The axis of rotation is parallel to $x_3$-axis and the circumferential direction corresponds to $x_1$-axis.  Then, the dynamics of the perturbation velocities is  described by~\eqref{eq:mainNS}. The flow is assumed to be invariant with respect to $x_1$ \mbox{($\partial_{x_1}=0$)} and periodic   in $x_3$ with period $L$. Therefore, $\Omega = \left\{ (x_2,x_3) \mid (x_2,x_3) \in [-1,1]\times [0,L] \right\}$. The laminar flow is given by $\boldsymbol{U}=(x_2,0,0)'=x_2\overrightarrow{e}_1$ and $P=P_0$. In~addition, 
$$
F = \begin{bmatrix} 0 & Ro & 0 \\ -Ro & 0 & 0 \\ 0 & 0 & 0  \end{bmatrix},
$$
where $Ro \in [0,1]$ is a parameter representing the Coriolis force\footnote{ That is,  $Ro=0$ ($Ro=1$) corresponds to the case where only the outer (inner) cylinder is rotating and $Ro=0.5$ is the case where both cylinders are rotating with the same velocity but in opposite direction.}. We consider  no-slip boundary conditions \mbox{$\boldsymbol{u}|_{x_2=-1}^1 = 0$} and  $\boldsymbol{u}(t,x_2,x_3)=\boldsymbol{u}(t,x_2,x_3+L)$. The Poincar\'e constant is then given by $C=\frac{\pi^2}{L^2+2^2}$.


Notice that the cases $Ro=0,1$ correspond to the Couette flow. Thus, the obtained results for rotating Couette flow can be applied to the Couette flow in special cases, as well. We are interested in finding estimates of the critical Reynolds number $Re_C$ using the following Lyapunov functional 
$$
V(u) = \int_0^{L} \int_{-1}^1 \left[\begin{smallmatrix} u_1 \\ u_2 \\ u_3 \end{smallmatrix}\right]^\prime \left[\begin{smallmatrix} k_1 & 0 & 0 \\ 0 & k_2 & 0 \\ 0 & 0 & k_2 \end{smallmatrix} \right]\left[\begin{smallmatrix} u_1 \\ u_2 \\ u_3 \end{smallmatrix} \right] \,\, \mathrm{d}x_2\mathrm{d}x_3,
$$
which is the same as Lyapunov functional~\eqref{eq:Lyap} considering invariance with respect to $x_1$. 

For stability analysis, we need to check inequality~\eqref{eq:Mmat} according to Item I in Corollary~\ref{LMIcor}. For this flow ($m=1,j=2,i=3$), we have
\begin{equation} \label{sddfsdf}
M = \begin{bmatrix} \frac{k_1C}{Re} &    \frac{k_2Ro - k_1(Ro-1)}{2} & 0 \\ \frac{k_2Ro - k_1(Ro-1)}{2} & \frac{k_2C}{Re}  & 0 \\
0 & 0 & \frac{k_2C}{Re}
\end{bmatrix} \ge 0
\end{equation}
This is a linear matrix inequality (LMI) feasibility problem with decision variables $k_1,k_2>0$.

To find estimates of $Re_C$ in the case of Couette Flow $Ro=0$, applying Schur complement theorem~\cite[p. 650]{BV04} to \eqref{sddfsdf},   we have
\begin{equation*}
\frac{k_1C}{Re} - \left(\frac{ k_1}{2} \right)^2 \left( \frac{Re}{k_2C} \right)\ge 0,~\frac{k_2C}{Re}\ge0,
\end{equation*}
which yields the inequality\footnote{For $Ro=1$, we can similarly obtain $\frac{k_1}{k_2} \ge \left( \frac{Re}{2C}\right)^2$.}
\begin{equation} \label{eq:satbcond}
\frac{k_2}{k_1} \ge \left( \frac{Re}{2C}\right)^2.
\end{equation}
This implies that the Couette flow is stable for all $Re$. Hence, for Couette flow, $Re_C=\infty$  obtained using Lyapunov functional~\eqref{eq:Lyap} coincides with linear stability limit \mbox{$Re_L = \infty$}~\cite{Romanov73}. 

\begin{figure}[!t]

\centerline{
                \includegraphics[scale=.3]{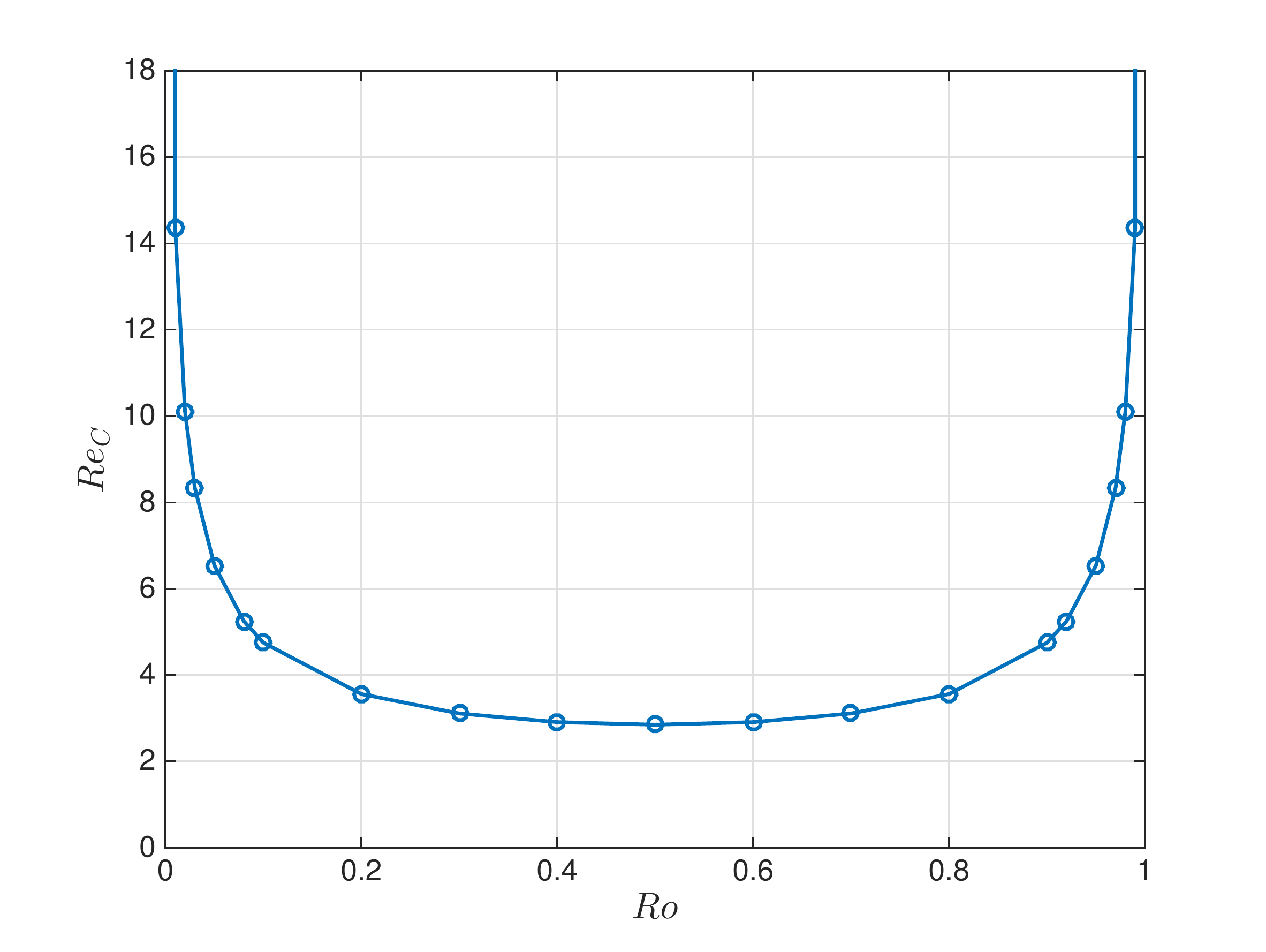}
}
        \caption{Estimated critical Reynolds numbers $Re$ in terms of $Ro$ for rotating Couette flow.}\label{fig2}
\end{figure}

\begin{figure*}[!t]

\centerline{
                \includegraphics[scale=.255]{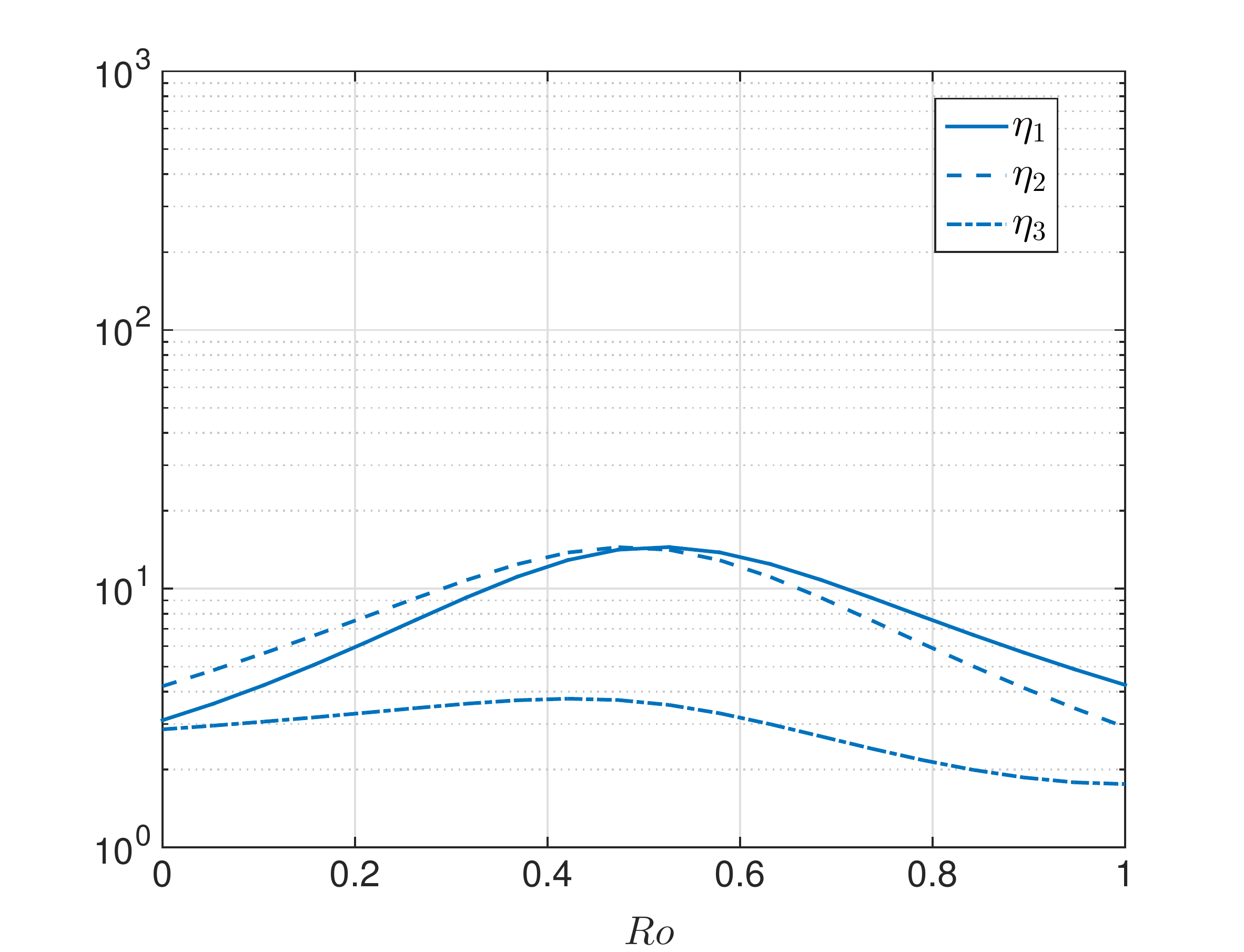}
                \includegraphics[scale=.255]{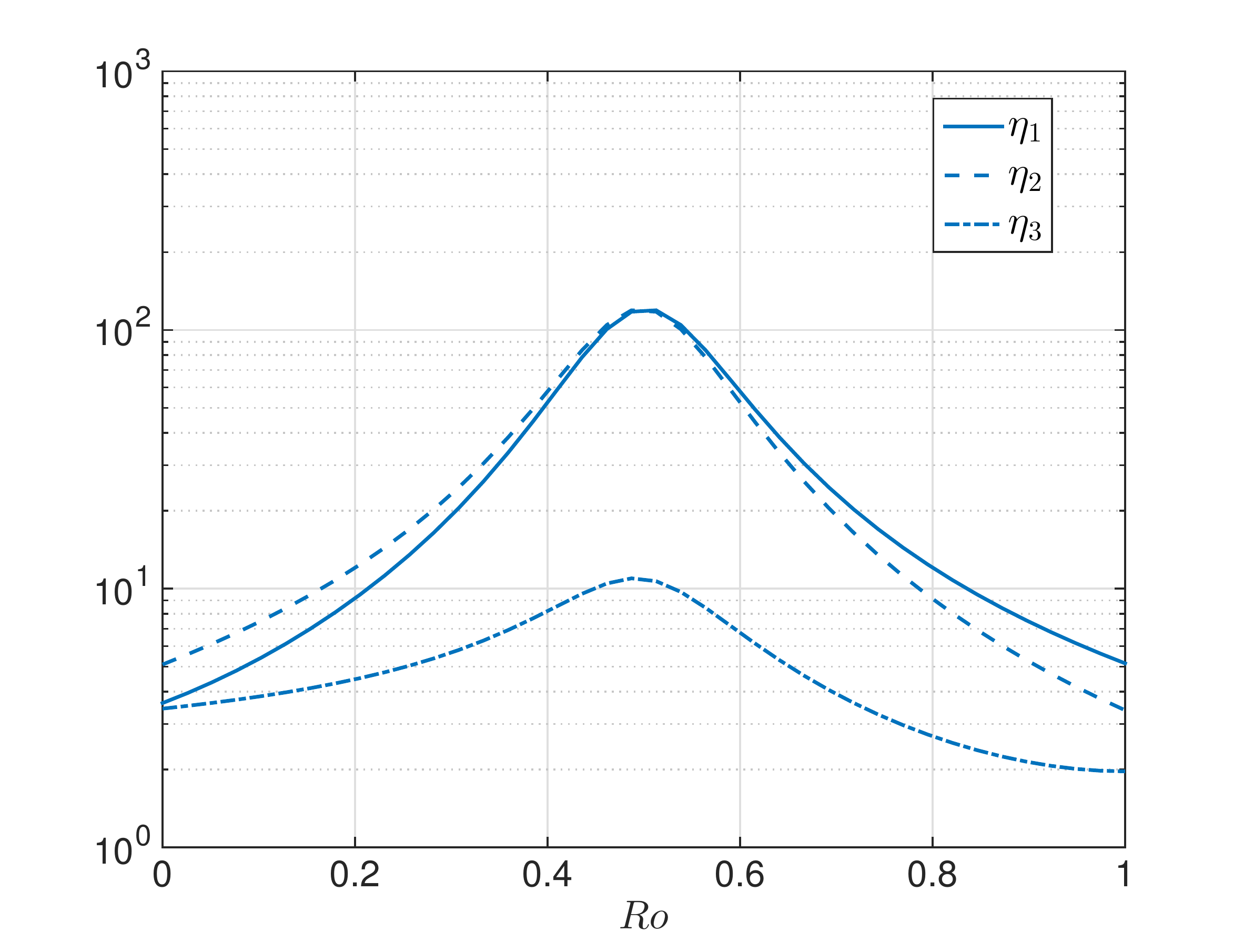}
                \includegraphics[scale=.255]{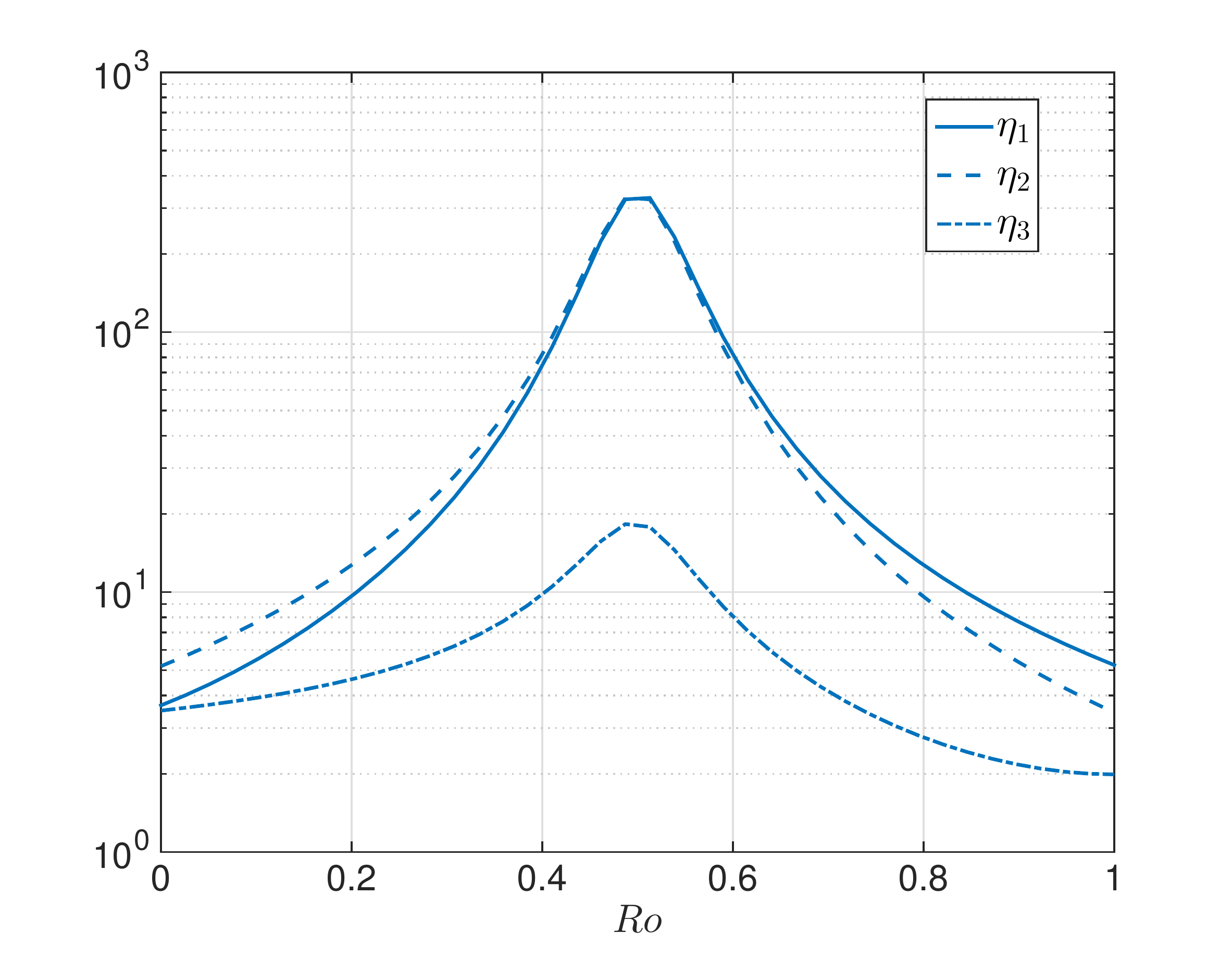}                              
}
        \caption{Upper bounds on induced $\mathcal{L}^2$-norms from $\boldsymbol{d}$ to perturbation velocities $\boldsymbol{u}$ of rotating Couette flow for different Reynolds numbers: $Re=2$ (left), $Re=2.8$ (middle), and $Re=2.83$ (right).}\label{fig3}
\end{figure*}

Let $L = \pi$. Figure \ref{fig2} illustrates the estimated critical Reynolds numbers $Re_C$ as a function of $Ro$ obtained from solving the LMI \eqref{sddfsdf} and performing a line search over $Re$. Notice that for the cases $Ro=0,1$ the flow is stable for all Reynolds numbers.

For induced $\mathcal{L}^2$-norm analysis, we apply inequality~\eqref{eq:Nmat} which for this particular flow is given by the following LMI

\begin{eqnarray}
N = \begin{pmat}[{..|}] 
~ & ~  & ~ & -\frac{k_1}{2} & 0 & 0\cr 
~ & M-\mathrm{I}_{3\times 3} & ~ & 0 & -\frac{k_2}{2} & 0 \cr 
~ & ~& ~ & 0 & 0 & -\frac{k_2}{2} \cr\-
-\frac{k_1}{2} & 0 & 0  & \eta_{1}^2 & 0 & 0 \cr
0 & -\frac{k_2}{2} & 0  & 0 & \eta_{2}^2  & 0   \cr
0 & 0 & -\frac{k_2}{2}  & 0 & 0 & \eta_{3}^2  \cr
\end{pmat} \ge 0 \nonumber
\end{eqnarray}
with $M$ as in~\eqref{sddfsdf}. 

Figure~\ref{fig3} depicts the obtained results for three different Reynolds numbers. As the Reynolds number approaches the estimated $Re_C$ for $Ro=0.5$, the upper-bounds on the induced $\mathcal{L}^2$-norm from the body forces $\boldsymbol{d}$ to perturbation velocities $\boldsymbol{u}$ increases dramatically.

The obtained upper-bounds on the induced $\mathcal{L}^2$-norm for Couette flow $Ro=0$, are also given in Figure~\ref{fig4}. Since the flow is stable for all Reynolds numbers, the induced $\mathcal{L}^2$-norms keep increasing with Reynolds number. The obtained upper-bounds depicted in Figure~\ref{fig4} are consistent with Corollary 2 and Corollary 4 in~\cite{JB05} and Theorem 1 in~\cite{BD01}, wherein it was demonstrated that $\eta_1^2 \propto O(Re)$, and $\eta_{2}^2,\eta_{3}^2 \propto O(Re^3)$ for Couette flow.

In order to check the ISS property, we check inequality~\eqref{eq:Pmat} from Corollary~\ref{LMIcor}  for the rotating Couette flow under study, i.e., 
\begin{equation}
P =\begin{pmat} [{..|}]  ~ & ~  & ~ & -\frac{k_1}{2} & 0 & 0 \cr
 ~ & M-Q & ~ & 0 & -\frac{k_2}{2} & 0 \cr
~ & ~ & ~ & 0 & 0 & -\frac{k_2}{2} \cr\-
 -\frac{k_1}{2} & 0 & 0  & \sigma_{1} & 0 & 0 \cr
0 & -\frac{k_2}{2} & 0  & 0 & \sigma_{2}  & 0   \cr
0 & 0 & -\frac{k_2}{2}  & 0 & 0 & \sigma_{3} \cr \end{pmat} \ge 0 \nonumber
\end{equation}
with $M$ given in~\eqref{sddfsdf} and $Q=\left[\begin{smallmatrix} k_1 \psi_1 & 0 & 0\\ 0 & k_2 \psi_2 & 0\\0& 0 & k_2 \psi_3 \end{smallmatrix}\right]$.  We fix $\psi_i = 10^{-4},~i=1,2,3$ and $L=2\pi$. Figure~\ref{figISS} depicts the maximum Reynolds number for which ISS certificates could be found $Re_{ISS}$ and stability critical Reynold's numbers $Re_C$ as a function of $Ro$. It appears that for $Ro \in (0,1)$ these two quantities coincide. However, for the case of Couette flow $Ro=0,1$, we obtain $Re_{ISS} = 316$ and $Re_C = \infty$. The quantity $Re_{ISS}=316$ is the closest estimate to the empirical Reynolds number $Re \approx 350$~\cite{TA92}.


\section{CONCLUSIONS AND FUTURE WORK} \label{sec:conclusions}

\subsection{Conclusions}

We studied stability and input-state properties of fluid flows with invariance in one direction. We formulated a class of appropriate Lyapunov/storage functionals for such flows. Conditions based on matrix inequalities are given for streamwise constant flows. When the laminar flow is given by a polynomial of spatial coordinates, the matrix inequalities can be checked using convex optimization. For illustration purposes, we applied the proposed method to study a model of rotating Couette flow.

\subsection{Future Work}

In this study, we considered flows in the Cartesian coordinate system. For many flows, like pipe Poiseuille flow, the coordinate system is naturally cylindrical. An extension of the results proposed in this paper to cylindrical coordinates is under study. 

In several scenarios in fluid mechanics, we are interested in a functional of the perturbation dynamics. For example, in the drag estimation problem, we are interested in estimating the functional of pressure over the surface of an airfoil. We are currently applying the methodology proposed in~\cite{AVP15} to address such problems in fluid mechanics.

\begin{figure}[!t]

\centerline{
                \includegraphics[scale=.3]{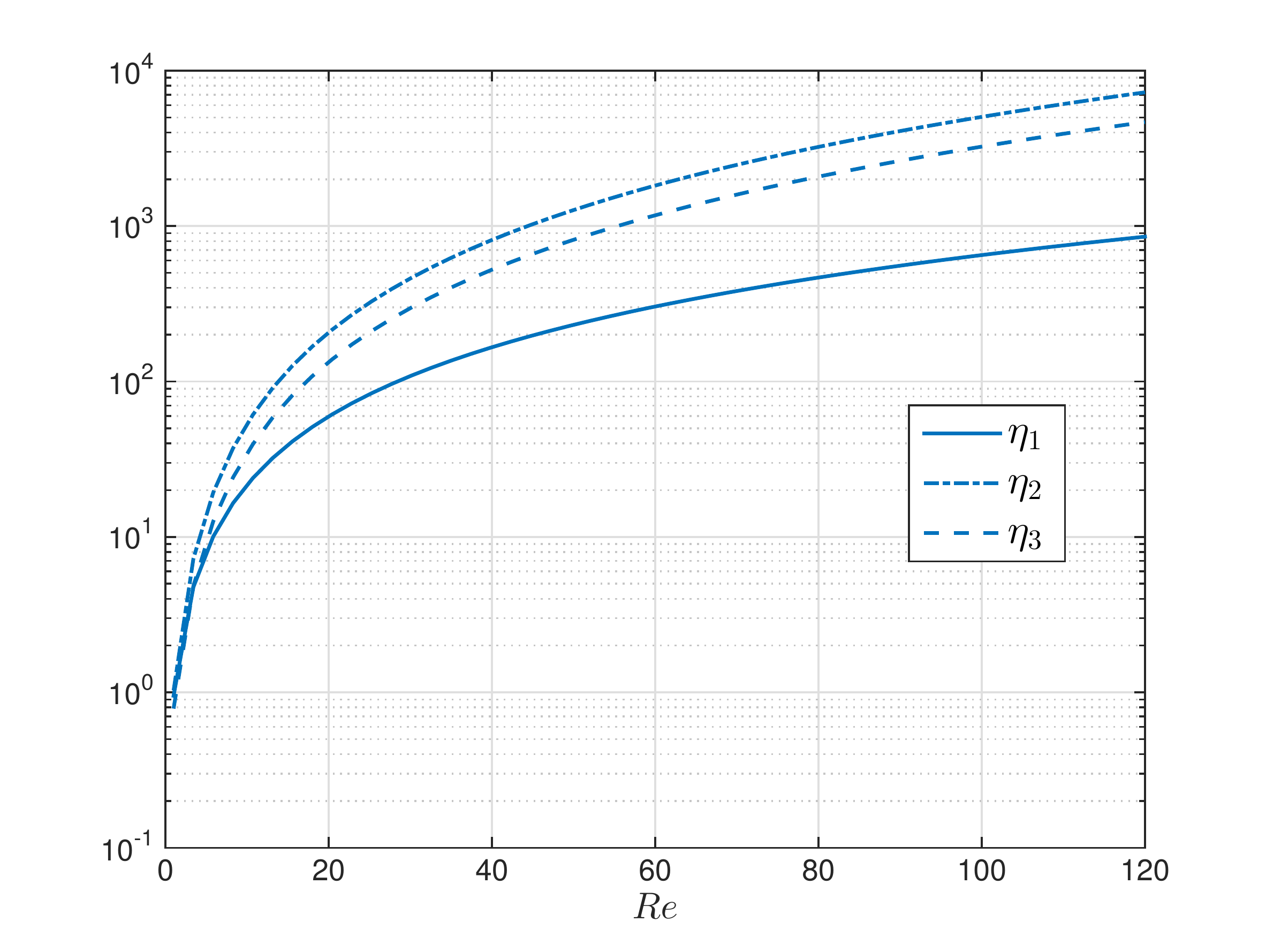}                           
}
        \caption{Upper bounds on induced $\mathcal{L}^2$-norms for perturbation velocities of Couette flow for different Reynolds numbers.}\label{fig4}
\end{figure}

\begin{figure}[!t]

\centerline{
                \includegraphics[scale=.3]{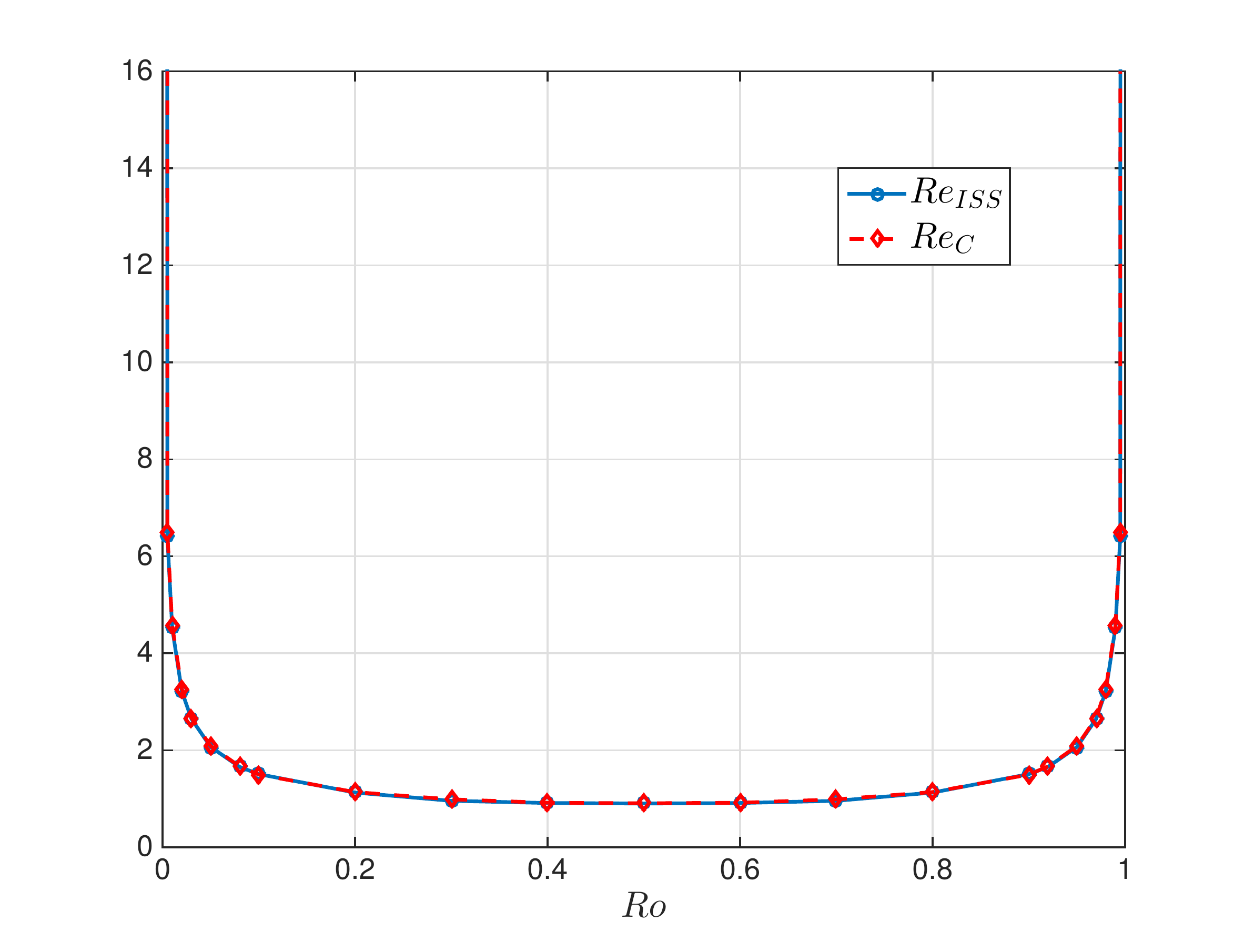}
}
        \caption{Estimated critical Reynolds numbers $Re_C$ and ISS Reynolds numbers $Re_{ISS}$ in terms of $Ro$.}\label{figISS}
\end{figure}

\section{ACKNOWLEDGMENTS}
The authors appreciate stimulating discussions by Prof. Sergei Chernyshenko from Department of Aeronautics, Imperial College London and Prof. Charles Doering, from Department of Mathematics, University of Michigan.


\bibliography{references}
\bibliographystyle{IEEEtran}


\appendix

\subsection{Poincar\'e Inequality}
\begin{lem}[\cite{PW60}]\label{inq:poincare}
Assume $\Omega \subset \mathbb{R}^2$ is a bounded, convex, Lipschitz domain with diameter $D$, and $\boldsymbol{u} \in \mathcal{C}^1(\Omega)$  with no-slip \mbox{$\boldsymbol{u}|_{\partial \Omega}=0$}  or periodic such that  $\iint_\Omega \boldsymbol{u} \,\, \mathrm{d}\Omega = 0$ boundary conditions. Then, the following inequality holds
\begin{equation*}
\frac{\pi}{D} \|  u \|_{\mathcal{L}^2_\Omega}  \le \| \nabla u \|_{\mathcal{L}^2_\Omega} .
\end{equation*}
\end{lem}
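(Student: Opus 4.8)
The plan is to reduce the two-dimensional estimate to a one-dimensional Wirtinger-type inequality along segments, exploiting convexity and the diameter bound, and to treat the two boundary-condition cases separately since they require different one-dimensional inequalities. In each case the assertion is equivalent to the spectral bound $\lambda_1 \ge \pi^2/D^2$, where $\lambda_1$ denotes the first Dirichlet eigenvalue of $-\nabla^2$ on $\Omega$ in the no-slip case and the first nonzero Neumann eigenvalue in the mean-zero periodic case; the stated constant $\pi/D$ is the sharp one, attained in the limit of a degenerate needle-like domain.

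First I would dispose of the no-slip case $u|_{\partial\Omega}=0$, which is self-contained. Since $\Omega$ is convex with diameter $D$, its projection onto the $x_2$-axis is an interval of length at most $D$; moreover, for each fixed $x_1$, convexity makes the slice $\{x_2 : (x_1,x_2)\in\Omega\}$ a single interval $[\alpha(x_1),\beta(x_1)]$ of length $\ell(x_1)\le D$ at whose endpoints $u$ vanishes. The sharp one-dimensional Dirichlet--Wirtinger inequality $\int_\alpha^\beta |f|^2\,\mathrm{d}x_2 \le (\ell/\pi)^2 \int_\alpha^\beta |\partial_{x_2}f|^2\,\mathrm{d}x_2$, whose extremal function is $\sin(\pi(x_2-\alpha)/\ell)$, then applies fibrewise. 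Using $\ell\le D$, integrating in $x_1$, and bounding $|\partial_{x_2}u|\le|\nabla u|$ yields $\|u\|_{\mathcal{L}^2_\Omega}^2 \le (D/\pi)^2\|\nabla u\|_{\mathcal{L}^2_\Omega}^2$, which is the claim.

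The mean-zero periodic case is the substantive one and is exactly the Payne--Weinberger inequality of~\cite{PW60}. The naive fibrewise argument fails here because the global constraint $\iint_\Omega u\,\mathrm{d}\Omega=0$ is not inherited slice by slice, so the one-dimensional Neumann--Wirtinger inequality $\int_0^\ell|f-\bar f|^2 \le (\ell/\pi)^2\int_0^\ell|f'|^2$ cannot be invoked on each fibre. The route I would follow is the slicing argument of Payne and Weinberger: repeatedly bisect $\Omega$ by hyperplanes into convex pieces on each of which $u$ retains zero mean, refining until every piece is a thin convex slice of width controlled by $D$, and on such pieces applying a weighted one-dimensional Wirtinger inequality with the sharp constant before summing over the pieces. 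The hard part will be precisely the geometric control of this bisection, ensuring that the refined pieces stay convex, have diameters dominated by $D$, and admit the one-dimensional bound with a constant that does not degrade in the limit; this delicate point is the technical core of~\cite{PW60}. Since the lemma is invoked as a cited result, I would supply the self-contained slab argument for the no-slip case and appeal to the Payne--Weinberger theorem for the mean-zero case.
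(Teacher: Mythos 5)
Your proposal is sound, and it should be noted that the paper itself offers no proof of this lemma: it is stated as a quoted result, attributed entirely to Payne--Weinberger~\cite{PW60}, which is exactly what you fall back on for the substantive mean-zero case. Your added value is the self-contained treatment of the no-slip case, and that argument is correct: convexity makes each slice $\{x_2 : (x_1,x_2)\in\Omega\}$ an interval of length $\ell(x_1)\le D$ whose endpoints lie on $\partial\Omega$, the one-dimensional Dirichlet--Wirtinger inequality applies fibrewise, and $|\partial_{x_2}u|\le|\nabla u|$ plus integration in $x_1$ gives the claim. You also correctly identify the genuine obstruction in the mean-zero case (the zero-mean constraint is not inherited by slices, so the fibrewise argument collapses) and that the bisection scheme of~\cite{PW60} is the technical core; deferring to that reference is legitimate here, since the paper does the same for the entire lemma. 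One small inaccuracy in an aside: your sharpness remark (``attained in the limit of a degenerate needle-like domain'') is true only for the Neumann/mean-zero constant. For the Dirichlet case the needle limit sends $\lambda_1\to\infty$, so $\pi^2/D^2$ is a valid but far-from-sharp lower bound there; this does not affect the validity of either half of your argument.
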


\subsection{Sum-of-Squares Programming}
Denote the ring of polynomials with real coefficients by $\mathcal{R}[x]$,  and the ring of polynomials with a sum-of-squares decomposition by $\Sigma[x] \subset \mathcal{R}[x]$. A polynomial $p(x)\in \Sigma[x]$ if $\exists p_i(x) \in \mathcal{R}[x]$, $i \in \{1, \ldots, n_d\}$ such that $p(x) = \sum_{i=1}^{n_d} p_i^2(x)$. Hence, $p(x)$ is clearly non-negative. The set of polynomials~$\{p_i\}_{i=1}^{n_d}$ is called \emph{SOS decomposition} of $p(x)$. The converse does not hold in general, that is, there exist non-negative polynomials which do not have an SOS decomposition~\cite{Par00}.  To test whether  an SOS decomposition exists for a given polynomial, one can solve an SDP (see~\cite{CLR95,Par00,CTVG99}). SOSTOOLS~\cite{sostools} is a software package for solving SOS programs.

\end{document}